\numberwithin{equation}{section}
\title[An inductive Julia-Carath\'eodory theorem]{An inductive Julia-Carath\'eodory theorem for Pick functions in two variables}
\author{
J. E. Pascoe
}
\thanks{Partially supported by National Science Foundation Mathematical
Science Postdoctoral Research Fellowship  
DMS 1606260}
\subjclass[2010]{32A70, 46E22}
\def\set#1#2{\{ #1 \, | \, #2\}}
\def\ctwo{\mathbb{C}^2}
\def\by-half{b_Y^{-\frac{1}{2}}}
\def\h{\mathcal{H}}
\def\rplus{\mathbb{R}^+}
\def\rtwoplus{{(\mathbb{R}^+)}^2}
\def\be{\begin{equation}}
\def\ee{\end{equation}}
\def\impart#1{{\rm Im}(#1)}
\def\min#1#2{{\rm min}\{#1,#2\}}
\def\cofrac#1#2{\frac{#2}{#1}}
\def\norm#1{\| #1 \|}
\def\be{\begin{equation}}
\def\ee{\end{equation}}
\newcommand{\abs}[1]{\left\vert#1\right\vert}
\def\set#1#2{\{ #1 \, | \, #2\}}
\def\h{\mathcal{H}}
\def\p{\mathcal{P}}
\def\ln{\mathcal{L}^N}
\def\lnminus{\mathcal{L}^{N-}}
\def\lnminusone{\mathcal{L}^{N-1}}
\def\res{{(A-z_Y)}^{-1}}
\def\McCarthy{M\raise.45ex\hbox{c}Carthy }
\def\McCarthyc{M\raise.45ex\hbox{c}Carthy, }
\newcommand\black{\color{black}}
\begin{document}

\bibliographystyle{plain}

\newtheorem{defin}[equation]{Definition}
\newtheorem{lem}[equation]{Lemma}
\newtheorem{prop}[equation]{Proposition}
\newtheorem{thm}[equation]{Theorem}
\newtheorem{claim}[equation]{Claim}
\newtheorem{cor}[equation]{Corollary}
\newtheorem{ques}[equation]{Question}

\begin{abstract}
	We study the asymptotic behavior of Pick functions, analytic functions which take the upper half plane to itself.
	We show that if a two variable Pick function $f$ has real residues to order $2N-1$ at infinity and the
	imaginary part of the remainder between $f$ and this expansion is of order $2N+1,$
	then $f$ has real residues to order $2N$ and directional residues to order $2N+1.$ Furthermore, $f$ has real residues to order
	$2N+1$ if and only if the $2N+1$-th derivative is given by a polynomial,
	thus obtaining a two variable analogue
	of a higher order Julia-Carath\'eodory type theorem.
\end{abstract}
\maketitle
\tableofcontents

\section{Intoduction}
	
	The simplest form of classical Julia-Carath\'eodory theorem, given by Carath\'eodory\cite{car29} and Julia\cite{ju20}, follows.
	\begin{thm}[Julia-Carath\'eodory theorem]
		Let $f:\mathbb{D} \rightarrow \overline{\mathbb{D}}$ be an analytic function.
		The limit
			$\lim_{t\rightarrow 1^-} \frac{1 - |f(t)|}{1-|t|}$
		exists if and only if
			$\lim_{t\rightarrow 1^-} f(t)$
		exists and has modulus $1$
		and
			the directional derivative at $1$ exists for all directions pointing into the disk.
	\end{thm}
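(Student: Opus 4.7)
The plan is to run the classical argument of Julia and Carath\'eodory, built around the Schwarz-Pick lemma and the geometry of horocycles. Since the statement is equivalent via the Cayley transform to one about analytic self-maps of the upper half plane, one can work in whichever model is convenient; I phrase the sketch in the disk as above.

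First I would invoke the Schwarz-Pick inequality
\[
\frac{|1-\overline{f(w)}f(z)|^2}{(1-|f(z)|^2)(1-|f(w)|^2)} \le \frac{|1-\bar{w}z|^2}{(1-|z|^2)(1-|w|^2)}
\]
and fix $w=t$ along the radius $t \to 1^-$ while the hypothesis controls $(1-|f(t)|)/(1-|t|)$. A normal-families argument extracts a boundary value $\omega = \lim_{t\to 1^-} f(t)$; because the Julia quotient is finite, $|\omega|=1$ necessarily, and passing to the limit yields Julia's inequality
\[
\frac{|1-\bar{\omega}f(z)|^2}{1-|f(z)|^2} \le \alpha \, \frac{|1-z|^2}{1-|z|^2}, \qquad z \in \mathbb{D},
\]
where $\alpha$ is the limiting Julia quotient. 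Geometrically this says $f$ sends horodisks centered at $1$ into horodisks centered at $\omega$ with contraction $\alpha$, and any non-tangential Stolz region at $1$ sits inside such a horodisk, giving $\lim_{z \to 1} f(z) = \omega$ non-tangentially.

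Next I would extract the directional derivative. Form $g(z) = (\omega - f(z))/(1-z)$. Julia's inequality forces $\operatorname{Re}\,\bar{\omega} g(z)$ to be bounded on each Stolz region. Monotonicity of $(1-|f(t)|)/(1-|t|)$ along the radius, which follows by iterating Schwarz-Pick, pins down the radial limit of $g$ as $\alpha\omega$, and a Lindel\"of-type theorem upgrades radial to non-tangential convergence. Directional derivatives into the disk follow immediately. The converse is a short Taylor-expansion computation: from $f(t) = \omega + f'(1)(t-1) + o(t-1)$ one obtains $1-|f(t)|^2 = 2(1-t)\operatorname{Re}(\bar{\omega} f'(1)) + o(1-t)$, and dividing by $(1+|f(t)|)(1-t) \to 2$ yields the desired limit. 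The main obstacle is passing from the pointwise Schwarz-Pick inequality to the monotone / normal-family statement that extracts an actual limit of the difference quotient; the rest is bookkeeping around horocycle geometry and Lindel\"of's theorem.
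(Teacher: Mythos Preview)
The paper does not actually prove this theorem: it is stated in the introduction as classical background, with attribution to Julia \cite{ju20} and Carath\'eodory \cite{car29}, and no proof is given. So there is nothing in the paper to compare your argument against.

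That said, your sketch is the standard classical route --- Schwarz--Pick, Julia's horocycle inequality, then a Lindel\"of-type upgrade from radial to nontangential limits --- and it is essentially correct in outline. One small wrinkle worth tightening: as written, the theorem claims the Julia quotient limit \emph{exists} (not merely that the $\liminf$ is finite), so in the forward direction you may simply take $\alpha$ to be that limit rather than extracting it via monotonicity; conversely, in the reverse direction your Taylor computation is fine. The ``main obstacle'' you flag --- extracting an actual limit of the difference quotient --- is handled exactly by the monotonicity of $t \mapsto (1-|f(t)|)/(1-t)$ that Schwarz--Pick provides, together with the horocycle bound, so once you have Julia's inequality the rest is indeed bookkeeping.
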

	The Julia-Carath\'eodory theorem was extended to higher derivatives by Bolotnikov and Kheifets in \cite{bokh06,bokh08} and earlier, on the upper half plane, by Nevanlinna in his solution of the Hamburger moment problem\cite{nev22}.
There has been some effort to prove an analogue of the Julia-Carath\'eodory theorem in several variables in the works of Abate \cite{ab98,ab05}, Agler \McCarthyc Young \cite{amy11a}, Jafari \cite{jaf93}, and \black Wlodarczyk \cite{wlo87}. We are interested in a fusion of the two approaches, that is, an analogue of
the Julia-Carath\'eodory theorem in several variables concerning higher derivatives.

	Let $\Pi$ denote the upper half plane. On $\Pi$ analogues of the above program exist since $\Pi$ is conformally
	equivalent to $\mathbb{D}.$
	We will give an analogue of the Julia-Carath\'eodory theorem
	on the domain $\Pi^2.$ We work in two variables since operator theoretic representation formulas exist
	for analytic functions $f: \Pi^2 \rightarrow \overline{\Pi},$ but do not exist in general due to some classically
	notorious obstruction\cite{par70, var71}.

	On $\Pi,$ we have the luxury of the Nevanlinna representation.
	\begin{thm}[R. Nevanlinna \cite{nev22}] \label{NevanlinnaRep}
	Let $h: \Pi \to \mathbb{C}$. There exists a finite positive Borel measure $\mu$ on $\mathbb{R}$ such that 
	\begin{equation}\label{NevanlinnaRepFormula}
	h(z) = \int\!\frac{1}{t-z}\, d\mu(t)
	\end{equation}
	if and only if $h$ is analytic, takes values in $\overline{\Pi},$ and
	\begin{equation}\label{NevanlinnaRepFormulaAsy}
	\liminf_{s\to\infty} s\abs{h(is)} < \infty.
	\end{equation}
	Moreover, for any Pick function $h$ satisfying Equation \eqref{NevanlinnaRepFormulaAsy}
	the measure $\mu$ in Equation \eqref{NevanlinnaRepFormula} is uniquely determined.
	\end{thm}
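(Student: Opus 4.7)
The plan is to prove the two directions separately and then handle uniqueness by a boundary-value argument.

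For the ``only if'' direction, suppose $h(z)=\int (t-z)^{-1}\, d\mu(t)$ for a finite positive Borel measure $\mu$. Analyticity follows by differentiating under the integral, justified by dominated convergence since $1/(t-z)$ is uniformly bounded for $z$ in any compact subset of $\Pi$. The identity $\impart{h(x+iy)}=\int y((t-x)^2+y^2)^{-1}\, d\mu(t)$ is manifestly nonnegative, so $h(\Pi)\subseteq \overline{\Pi}$. Finally, $|h(is)|\le \int |t-is|^{-1}\, d\mu(t)\le \mu(\mathbb{R})/s$, hence $s|h(is)|\le \mu(\mathbb{R})$ and the $\liminf$ is finite.

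For the ``if'' direction, I would start from the general Herglotz--Nevanlinna representation of Pick functions,
\begin{equation}
h(z)=a+bz+\int\!\left(\frac{1}{t-z}-\frac{t}{1+t^2}\right)d\mu(t),
\end{equation}
with $a\in\mathbb{R}$, $b\ge 0$, and $\mu$ a positive Borel measure with $\int (1+t^2)^{-1}\,d\mu(t)<\infty$. This representation is classical and can be derived by Cayley-transporting $h$ to the disk and applying Herglotz's theorem for functions with nonnegative real part. Choose a subsequence $s_n\to\infty$ on which $s_n|h(is_n)|$ is bounded. Dividing the formula by $is_n$, every term except $b$ tends to zero by dominated convergence, forcing $b=0$. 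A direct manipulation yields
\begin{equation}
\repart{is\cdot h(is)}=-\int\!\frac{s^2}{t^2+s^2}\, d\mu(t),
\end{equation}
and by monotone convergence the right side decreases to $-\mu(\mathbb{R})$ as $s\to\infty$; the boundedness of $s_n|h(is_n)|$ then forces $\mu(\mathbb{R})<\infty$. With $\mu$ finite we may absorb the auxiliary term to obtain $h(z)=c+\int (t-z)^{-1}\,d\mu(t)$ for some $c\in\mathbb{R}$; since $h(is_n)\to 0$ while the integral tends to zero on the imaginary axis, we conclude $c=0$.

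For uniqueness, the Stieltjes inversion formula recovers $\mu$ from $h$ alone: the identity $\impart{h(x+iy)}=\int y((t-x)^2+y^2)^{-1}\,d\mu(t)$ shows that $\pi^{-1}\impart{h(\cdot+iy)}$ is the Poisson extension of $\mu$, so at continuity intervals $(a,b)$ of the distribution function,
\begin{equation}
\mu((a,b))=\lim_{y\to 0^+}\frac{1}{\pi}\int_a^b \impart{h(x+iy)}\, dx.
\end{equation}
The main technical hurdle is invoking (or reproving) the Herglotz--Nevanlinna representation; once that is available, the rest is bookkeeping with monotone and dominated convergence along the imaginary axis.
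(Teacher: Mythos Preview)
The paper does not prove this theorem; it is stated as classical background and attributed to Nevanlinna \cite{nev22}, with no argument given. There is therefore nothing in the paper to compare your proposal against.

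That said, your argument is the standard one and is correct. The forward direction is routine. For the converse you invoke the general Herglotz--Nevanlinna representation, kill the linear term $b$ by dividing through by $is_n$, then use the identity $\repart{is\,h(is)}=-\int s^2(t^2+s^2)^{-1}\,d\mu(t)$ together with monotone convergence to force $\mu(\mathbb{R})<\infty$, and finally eliminate the constant using $h(is_n)\to 0$. Uniqueness via Stieltjes inversion is also standard. The only step you flag as a ``technical hurdle'' --- the Herglotz--Nevanlinna formula itself --- is indeed the substantive input, but it is entirely classical and is exactly what the paper is citing rather than proving.
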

	Notably, the condition \eqref{NevanlinnaRepFormulaAsy} is conformally related to the limit
	in the classical Julia-Carath\'eodory theorem.
	The Nevanlinna representation can be used to develop a theory of higher order regularity, since essentially
	questions about regularity are equivalent to elementary questions in real analysis and measure theory.
	Namely,	since
		$$\frac{1}{t-z} = -\sum^{\infty}_{n=0} \frac{t^n}{z^{n+1}}$$
	questions about regularity at $\infty$ can be reduced to questions about the existence of moments
	$\int t^n d\mu(t).$ The Nevanlinna representation in several variables is given in terms of operator
	theory, and so questions there can be reduced to questions some operator theoretic analogue
	of moments.

\subsection{The L\"owner class}
	We denote the two variable Pick class, the set of holomorphic functions from $\Pi^2$ to $\Pi$, as $\p_2.$
	
	In \cite{am11}, Agler and \McCarthy defined the L\"owner class at infinity.
		\begin{defin} \label{lndfn}
			The  L\"owner class at $\infty,$ denoted $\ln,$ is the set of functions
			$h\in\p_2$ such that $\lim_{s\rightarrow \infty} h(is,is) =0$ and there exists a multi-indexed sequence of real numbers $(\rho_n)_{|n|\leq 2N-1}$
			(here, each $n = (n_1,n_2)$ for some non-negative integers
			$n_1$ and $n_2$ and $|n| = n_1+n_2$)
			such that 
			$$h(z)=\sum_{|n|\leq 2N-1} \frac{\rho_n}{z^n} + o\left(\frac{1}{\|z\|^{2N-1}}\right) \text{ nontangentially}.$$
		\end{defin}
	An asymptotic formula holds \emph{nontangentially} at $\infty$ if for each $c \in \mathbb{R}$
	the formula holds for all $z$ large enough satisfying
		$\norm{z} \le c\ \min{\impart{z_1}}{\impart{z_2}}.$

		A weaker notion of regularity is given by the intermediate L\"owner class.
		\begin{defin}
			The intermediate L\"owner class at $\infty,$ denoted $\lnminus,$ is the set of functions
			$h\in\p_2$ such that $\lim_{s\rightarrow \infty} h(is,is) =0$ and there exists a multi-indexed sequence of real numbers $(\rho_n)_{|n|\leq 2N-2}$
			such that 
				$$h(z)=\sum_{|n|\leq 2N-2} \frac{\rho_n}{z^n} + O\left(\frac{1}{\|z\|^{2N-1}}\right) \text{ nontangentially}.$$
		\end{defin}
	We show that $\ln \neq \lnminus$ in Section $\ref{lnneqlnminus}.$

	We examine an inductive relationship between $\lnminusone, \lnminus,$ and $\ln,$ which is given
	in the following two theorems.
	
	Our first main result describes when a function in $\lnminusone$ is in $\lnminus.$
		\begin{thm} \label{thmoneprime}
			Let $h\in \p_2.$
			The following are equivalent:
			\begin{enumerate}
				\item $h \in \lnminus.$
				\item $h\in \lnminusone$ and for each $b\in\rtwoplus,$
					$$s^{2N-1}\text{Im }\left[h(isb)-\sum_{|n|\leq{2N-3}} \frac{\rho_n}{(isb)^{n}}\right]$$
				is bounded for large $s$.
			\end{enumerate}
		\end{thm}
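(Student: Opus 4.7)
The proof splits into the two implications. For $(1)\Rightarrow(2)$, suppose $h\in\lnminus$ with
\[
h(z)=\sum_{|n|\le 2N-2}\frac{\rho_n}{z^n}+O\!\left(\frac{1}{\|z\|^{2N-1}}\right)\ \text{nontangentially}.
\]
Absorbing the $|n|=2N-2$ monomials into the error immediately places $h$ in $\lnminusone$. Along the ray $z=isb$ with $b\in\rtwoplus$, each term $\rho_n/(isb)^n$ with $|n|=2N-2$ equals $(-1)^{N-1}\rho_n s^{-(2N-2)}/b^n$, which is purely real, so removing such terms does not change the imaginary part. Hence
\[
\impart{h(isb)-\sum_{|n|\le 2N-3}\frac{\rho_n}{(isb)^n}}=\impart{h(isb)-\sum_{|n|\le 2N-2}\frac{\rho_n}{(isb)^n}}=O(s^{-(2N-1)}),
\]
and multiplication by $s^{2N-1}$ gives the required boundedness.

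For the converse $(2)\Rightarrow(1)$, my strategy is to appeal to the operator-theoretic Nevanlinna-type representation of two-variable Pick functions: there exist a Hilbert space $\h$, a self-adjoint operator $A$, a positive contraction $Y$, and a vector $\alpha\in\h$ with $h(z)=\langle(A-z_Y)^{-1}\alpha,\alpha\rangle$, where $z_Y=z_1Y+z_2(1-Y)$. For $b\in\rtwoplus$, the operator $b_Y=b_1Y+b_2(1-Y)$ is positive and invertible, and the standard resolvent identity yields
\[
\impart{h(isb)}=s\,\|\byhalf(A+isb_Y)^{-1}\alpha\|^2.
\]
Under the $\lnminusone$ hypothesis, the Neumann expansion of $(A+isb_Y)^{-1}$ in powers of $s^{-1}$ produces the residues $\{\rho_n\}_{|n|\le 2N-3}$ as explicit sesquilinear expressions in $A,\byinv,\byhalf$, and $\alpha$. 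Condition (2) states that after subtracting the corresponding partial sum from $\impart{h(isb)}$, the residual is $O(s^{-(2N-1)})$; rewriting this on the norm-squared side, the subtraction telescopes and delivers a uniform-in-$s$ norm bound on an explicit sequence of vectors in $\h$.

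The central step is to extract a weak limit of these vectors, producing an element $\beta_b\in\h$ (morally of the form $(\byinv A)^{N-1}\byhalf\alpha$, possibly in a suitable Hilbert space extension), and to read off from its pairings with fixed vectors the candidate scalars $\rho_n$ for $|n|=2N-2$. Using joint holomorphy of $h$ in $(z_1,z_2)$, these pairings must depend polynomially on $b\in\rtwoplus$, and the coefficients form a well-defined real sequence $\{\rho_n\}_{|n|=2N-2}$. Feeding these back into the resolvent expansion, combined with the operator bound $\|(A-z_Y)^{-1}\|\le\|\impart{z_Y}^{-1}\|\le c\|z\|^{-1}$ valid under the nontangential cone condition $\|z\|\le c\min{\impart{z_1}}{\impart{z_2}}$, yields the desired $O(\|z\|^{-(2N-1)})$ remainder. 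The main obstacle I anticipate is rigorously justifying the weak limit and showing that its $b$-dependence factors polynomially with real coefficients independent of $b$; this is the genuinely two-variable heart of the argument, where the direct scalar moment extraction of the classical Nevanlinna case is not available and one must work at the level of operator-valued data.
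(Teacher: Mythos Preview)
Your forward direction $(1)\Rightarrow(2)$ is correct and is exactly what the paper does: the $|n|=2N-2$ terms are real along the ray $isb$, so they contribute nothing to the imaginary part, and the $\lnminus$ remainder bound gives $O(s^{-(2N-1)})$ directly.

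For the converse $(2)\Rightarrow(1)$, your sketch is in the right spirit but stops precisely at the real difficulty, which you yourself flag: extracting a weak limit $\beta_b$ and then arguing that its $b$-dependence is polynomial with real coefficients. The paper circumvents this obstacle entirely, and the tools it uses are worth knowing. First, rather than a Neumann expansion and soft estimates, the paper proves an exact telescoping identity: writing $X_b=\by-half A\by-half$ and $\beta_k=X_b^k\by-half\alpha$, one gets
\[
s^{2N-1}\,\impart{\!\left[h(isb)-\sum_{|n|\le 2N-3}\frac{\rho_n}{(isb)^n}\right]}=-\left\langle\frac{s^2X_b^2}{X_b^2+s^2}\,\beta_{N-2},\beta_{N-2}\right\rangle.
\]
Second, via the spectral theorem the integrand $s^2x^2/(x^2+s^2)$ is monotone in $s$, so boundedness plus monotone convergence gives $X_b\beta_{N-2}\in\h$ as a genuine strong limit, not merely a weak one; this is exactly the statement that $A$ has real vector $(Y,\alpha)$-moments to order $N-1$. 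Third, and this is where your anticipated obstacle dissolves, a prior structural theorem of Agler and M\raise.45ex\hbox{c}Carthy (stated in the paper as Theorem~\ref{hmvsrce}) says that once real vector moments exist to a given order, $R_{N-1}(b)$ is automatically a vector-valued polynomial in $1/b_1,1/b_2$. You do not have to manufacture the polynomial $b$-dependence from holomorphy; it is built into the moment theory. From there, $r_{2N-2}(z)=\langle AR_{N-1}(z),R_{N-1}(\bar z)\rangle$ is a genuine polynomial in $1/z$ with real coefficients, and the nontangential $O(\|z\|^{-(2N-1)})$ remainder follows from the operator bound $\|(A-z_Y)^{-1}\|\le c/\|z\|$ you already noted.

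In short, the missing ingredients are the telescoping identity (which replaces your Neumann expansion with an exact spectral expression), monotone convergence (which upgrades your weak limit to a strong one), and the Agler--M\raise.45ex\hbox{c}Carthy polynomiality theorem (which removes the ``genuinely two-variable'' obstruction you identified).
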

		We prove Theorem \ref{thmoneprime} as Theorem \ref{thmone} in terms
		of the language
		of the  Agler-\McCarthy vector moment theory, which we will discuss later.

	Our second main result describes when a function in $\lnminus$ is in $\ln.$
		\begin{thm} \label{thmtwoprime}
			Let $h\in \p_2.$
			The following are equivalent:
			\begin{enumerate}
				\item $h \in \ln.$
				\item $h \in \lnminus$ and there are residues, not necessarily real, $\{\rho_n\}_{n\leq 2N-1}$ such that
					$$h(z) = \sum_{|n|\leq 2N-1} \frac{\rho_n}{z^n}+ o\left(\frac{1}{\|z\|^{2N-1}}\right)$$
				nontangentially.
			\end{enumerate}
		\end{thm}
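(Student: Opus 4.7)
The implication $(1)\Rightarrow(2)$ is immediate from the definitions: the real expansion of order $2N-1$ guaranteed by $h\in\ln$ a fortiori witnesses the (possibly complex) expansion demanded in (2), while truncating it at order $2N-2$ recovers the $\lnminus$ expansion. So my plan is to prove $(2)\Rightarrow(1)$, where the only remaining task is to show that each $\rho_n$ with $|n|=2N-1$ is real; the residues with $|n|\leq 2N-2$ match the $\lnminus$ residues by uniqueness of asymptotic expansions (itself an easy consequence of ray restriction and linear independence of monomials) and are real by hypothesis.

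The core idea is to reduce to the one-variable theory by slicing. Fix $b=(b_1,b_2)\in\rtwoplus$ and set $h_b(\lambda):=h(\lambda b_1,\lambda b_2)$ for $\lambda\in\Pi$. Since $h\in\p_2$, the function $h_b$ is a one-variable Pick function with $h_b(\lambda)\to 0$ at infinity, and because $\norm{\lambda b}/\min{\impart{\lambda b_1}}{\impart{\lambda b_2}}=\norm{b}\,|\lambda|/(\min{b_1}{b_2}\impart{\lambda})$, any nontangential approach $\lambda\to\infty$ in $\Pi$ induces a nontangential approach $\lambda b\to\infty$ in $\Pi^2$. Substituting $z=\lambda b$ into the hypothesized expansion yields
\[
h_b(\lambda)=\sum_{k=1}^{2N-1}\frac{c_k(b)}{\lambda^{k}}+o(|\lambda|^{-(2N-1)})\quad\text{nontangentially in }\Pi,
\]
with $c_k(b)=\sum_{|n|=k}\rho_n b_1^{-n_1}b_2^{-n_2}$.

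Next I will invoke the one-variable higher-order Julia-Carath\'eodory theorem of Nevanlinna \cite{nev22} (equivalently the Bolotnikov-Kheifets version \cite{bokh06} via the Cayley transform): a Pick function on $\Pi$ vanishing at $\infty$ admitting such a nontangential expansion must have real coefficients, since up to sign they coincide with moments of its positive Nevanlinna representing measure. In particular $c_{2N-1}(b)\in\mathbb{R}$ for every $b\in\rtwoplus$. The monomials $b_1^{-n_1}b_2^{-n_2}$ with $n_1+n_2=2N-1$ are linearly independent on $\rtwoplus$ (after multiplying through by $b_1^{2N-1}$, these become distinct powers of $b_1/b_2$), so taking imaginary parts in $c_{2N-1}(b)=\sum_{|n|=2N-1}\rho_n b_1^{-n_1}b_2^{-n_2}$ forces $\impart{\rho_n}=0$ for each such $n$, placing $h\in\ln$.

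The step I expect to be the main obstacle is securing the one-variable input in exactly the form required: namely, that a nontangential asymptotic expansion of order $2N-1$ with small-$o$ error for a Pick function on $\Pi$ vanishing at $\infty$ forces existence -- and hence realness -- of the first $2N-1$ moments of its Nevanlinna representing measure. This is morally the content of Nevanlinna's work on the Hamburger moment problem, but a clean derivation may require either iterative Schur-type transforms such as $h\mapsto -1/(h-c_1/\lambda)$ or direct dominated-convergence manipulations of $\int t^{k-1}/(t^2+s^2)\,d\mu(t)$ to extract all moments up to order $2N-2$ from the asymptotic data.
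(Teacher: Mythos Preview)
Your argument is correct and takes a genuinely different route from the paper's. The paper proves $(2)\Rightarrow(1)$ via the two-variable operator machinery: it invokes the type~I representation $h(z)=\langle (A-z_Y)^{-1}\alpha,\alpha\rangle$, the telescoping Lemma~\ref{lem3.100}, and the spectral theorem applied to $X_b=b_Y^{-1/2}Ab_Y^{-1/2}$ to show that the scalar moment $r_{2N-1}(b)$ coincides with $-\sum_{|n|=2N-1}\rho_n/b^n$; since $r_{2N-1}$ is real-valued by construction, linear independence of the monomials forces each $\rho_n$ real. You instead slice along rays $z=\lambda b$, land in the one-variable Pick class, and appeal to the classical Nevanlinna/Hamburger theory to conclude that the ray coefficients $c_{2N-1}(b)$ are real, then finish with the same linear-independence step.

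The two arguments are closer than they first appear: applying the spectral theorem to $X_b$ is precisely what produces the one-variable Nevanlinna measure for your sliced function $h_b$, since $h_b(\lambda)=\langle(X_b-\lambda)^{-1}\beta_0,\beta_0\rangle$. So the paper is in effect re-deriving the one-variable moment theory inside its operator framework, while you import it as a black box. Your route is more elementary and sidesteps the Agler--M\raise.45ex\hbox{c}Carthy vector-moment apparatus entirely; the paper's route, on the other hand, is self-contained within the framework it has already built and delivers the operator-theoretic characterization (condition~(2) of Theorem~\ref{thmtwo}) for free. The one place your proposal is honest about being incomplete---securing the one-variable input that a nontangential $o(|\lambda|^{-(2N-1)})$ expansion forces reality of all $2N-1$ coefficients---is indeed standard (iterated dominated convergence on $\int t^k/(t^2+s^2)\,d\mu$, exactly as you sketch), so there is no real gap.
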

		We prove Theorem \ref{thmtwoprime} as Theorem \ref{thmtwo} in terms
		of the language of the  Agler-\McCarthy vector moment theory.

		For $N=1$ our theorems are conformally equivalent the two-variable
		Julia-Carath\'eodory theorem proven on $\mathbb{D}^2$ in 
		Agler \McCarthyc Young \cite{amy11a} where the conformal
		analogues of $\mathcal{L}^1$
		and $\mathcal{L}^{1-}$ were called C-points and B-points.
		An analysis for $N=1$ was given on $\Pi^2$ in  Agler, Tully-Doyle, Young\cite{aty11,aty12}.

\section{The Agler-\McCarthy vector moment theory}
	A calculus was developed to calculate the residues of functions in $\p_2$ at $\infty$ in \cite{aty11, am11}.
		\begin{thm}[Type I two variable Nevanlinna representation \cite{aty11}]
			Let $h\in \p_2$ and suppose that
				$sh(is,is)$
			is bounded for real $s$ large enough.
			Then, there is a separable \black Hilbert space $\mathcal{H},$ an unbounded self-adjoint operator $A$ on $\mathcal{H},$
			a positive contraction $Y$ and a vector $\alpha \in \mathcal{H}$ such that
				$$\langle(A-z_Y)^{-1}\alpha,\alpha\rangle $$
			where $z_Y = Yz_1 + (1-Y)z_2.$
		\end{thm}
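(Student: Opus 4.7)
The plan is to build the operator-theoretic model from an Agler-type kernel decomposition of $h$, which is available in the two-variable setting because Ando's dilation theorem (equivalently Agler's representation for Schur functions on $\mathbb{D}^2$) transfers through the Cayley map. The first step is to establish that any $h\in\p_2$ with the stated growth admits positive kernels $K_1,K_2$ on $\Pi^2\times\Pi^2$ satisfying
$$h(z)-\overline{h(w)} \,=\, (z_1-\overline{w_1})\,K_1(z,w) + (z_2-\overline{w_2})\,K_2(z,w).$$
Next, I form the reproducing kernel Hilbert spaces $\mathcal{H}_j$ associated to $K_j$, set $\mathcal{H}=\mathcal{H}_1\oplus\mathcal{H}_2$, and take $Y$ to be the orthogonal projection onto the first summand. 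Then $Y$ is a positive contraction, $z_Y=Yz_1+(1-Y)z_2$ has strictly positive imaginary part for every $z\in\Pi^2$, and $A-z_Y$ will be invertible as soon as $A$ is self-adjoint.

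For the construction of $A$ and $\alpha$, the natural ansatz is that the resolvent vector $v_z:=(A-z_Y)^{-1}\alpha$ is the pair of reproducing kernels $\bigl(K_1(\cdot,z),K_2(\cdot,z)\bigr)\in\mathcal{H}$. One then declares $(A-z_Y)v_z=\alpha$ on the linear span of the $v_z$; consistency across different $z$ is exactly the content of the Agler decomposition through the resolvent identity
$$(A-z_Y)^{-1}-(A-\overline{w}_Y)^{-1} = (A-z_Y)^{-1}\bigl(z_Y-\overline{w}_Y\bigr)(A-\overline{w}_Y)^{-1},$$
once one distributes $z_Y-\overline{w}_Y=Y(z_1-\overline{w_1})+(1-Y)(z_2-\overline{w_2})$ and matches terms against $K_1$ and $K_2$. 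The vector $\alpha$ is produced from the growth hypothesis: since $|sh(is,is)|\leq M$ for large $s$, the family $(is)\,v_{(is,is)}$ is norm-bounded in $\mathcal{H}$, and extracting a suitable weak limit along the imaginary diagonal yields a $z$-independent cyclic seed $\alpha$ against which $h(z)=\langle v_z,\alpha\rangle$ holds; the normalization $\lim_s h(is,is)=0$ (automatic from boundedness of $sh(is,is)$) rules out an additive constant.

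The main obstacle will be promoting the densely defined symmetric operator implicitly constructed on $\mathrm{span}\{v_z:z\in\Pi^2\}$ to a genuine self-adjoint operator on $\mathcal{H}$: a priori its deficiency indices can be nonzero, so one must either choose a self-adjoint extension (possibly after dilating to a larger Hilbert space) or, more cleanly, realize $A$ directly as multiplication by a real variable on a Lebesgue-type functional model, reading this model off the Cayley transform of the unitary colligation supplied by Agler's theorem on $\mathbb{D}^2$. Unboundedness of $A$ is then unavoidable in general, since a bounded $A$ would force $\langle(A-z_Y)^{-1}\alpha,\alpha\rangle$ to decay uniformly like $\|z\|^{-1}$, contrary to the asymptotics that a generic $h\in\p_2$ exhibits.
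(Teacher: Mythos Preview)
The paper does not prove this theorem at all: it is quoted as the ``Type I two variable Nevanlinna representation'' from \cite{aty11} and used as a black box throughout, so there is no proof in the paper to compare your attempt against.

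That said, your outline is essentially the route taken in the cited source and in \cite{am11}: one starts from the two-variable Agler/Herglotz decomposition
\[
h(z)-\overline{h(w)} = (z_1-\overline{w_1})K_1(z,w)+(z_2-\overline{w_2})K_2(z,w),
\]
builds $\mathcal{H}=\mathcal{H}_1\oplus\mathcal{H}_2$ from the positive kernels, lets $Y$ be the projection onto $\mathcal{H}_1$, and then manufactures $A$ and $\alpha$ so that $(A-z_Y)^{-1}\alpha$ is the column of kernel functions. Your identification of the resolvent identity with the Agler identity is exactly the right consistency check.

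Where your sketch is thin is precisely where the real work lies. First, the existence of $\alpha$ as a weak limit of $is\,v_{(is,is)}$ is not enough by itself: you need this limit to be \emph{strong} (so that $\alpha$ genuinely sits in $\mathcal{H}$ and the formula $h(z)=\langle v_z,\alpha\rangle$ holds for all $z$), and this is where the hypothesis $\liminf_s s|h(is,is)|<\infty$ is actually consumed, via a monotonicity/Cauchy argument on $\|is\,v_{(is,is)}\|^2$. Second, and more seriously, the symmetric operator you define on $\mathrm{span}\{v_z\}$ need not have equal deficiency indices in any obvious way, and ``choose a self-adjoint extension, possibly on a larger space'' is doing a lot of unearned work. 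In the literature this is handled not by abstract extension theory but by your second suggestion: one passes through the Cayley transform to a Schur function on $\mathbb{D}^2$, invokes the Agler transfer-function realization with a \emph{unitary} colligation, and then Cayley-transforms the unitary back to obtain a genuinely self-adjoint $A$ (as multiplication by the real variable in the spectral model). If you intend to present a proof, that is the path you should commit to and carry out, rather than leaving the deficiency-index issue as an alternative.
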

	In terms of the above representation, Agler and \McCarthy defined \emph{vector moments,}
	which occur in a way algebraically analogous
	to the way classical moments occur in the theory of the Nevanlinna representation
	in one variable.\cite{nev22}.
		\begin{defin}
			Given a separable \black Hilbert space $\mathcal{H},$ an unbounded self-adjoint operator $A$ on $\mathcal{H},$
			a positive contraction $Y$ and a vector $\alpha \in \mathcal{H},$
			we  say \black $A$ has \emph{vector moments} to order $N$ denoted $(R_k)^N_{i=1}$ if 
				$$R_k(b)=(b_Y^{-1}A)^{k-1}b_Y^{-1}\alpha$$
			exists for every $b\in \rtwoplus.$
				
			If $R_k$ is a vector-valued polynomial in $\frac{1}{b_1}$ and $\frac{1}{b_2},$ that is, there are vectors $(\alpha_n)_{|n|=k}$ such that
				$$R_k(b) = \sum_{|n|=k} \frac{1}{b^n}\alpha_n ,\black$$
			we extend $R_k$ to all of $\mathbb{C}^2$ via its formula.
		\end{defin}


	
	To prove Theorem \ref{thmoneprime} we prove the following equivalence in terms of the Agler-\McCarthy 
	vector moment theory.
		\begin{thm} \label{thmone}
			Let $h\in \p_2.$
			The following are equivalent:
			\begin{enumerate}
				\item $h \in \lnminus.$
				\item $h \in \lnminusone$ and for any type I representation of $h,$
					$$h(z) = \langle(A-z_Y)^{-1}\alpha,\alpha\rangle ,$$
				$A$ has real vector $(Y,\alpha)$-moments to order $N-1.$
				\item For each $b\in\rtwoplus,$ 
					$$s^{2N-1}\text{Im }\left[h(isb)-\sum_{|n|\leq{2N-3}} \frac{\rho_n}{(isb)^{n}}\right]$$
				is bounded for large $s$.
			\end{enumerate}
		\end{thm}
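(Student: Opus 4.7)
The plan is to close the cycle of implications $(1) \Rightarrow (3) \Rightarrow (2) \Rightarrow (1)$, using the Type I Nevanlinna representation
$$h(z)=\langle(A-z_Y)^{-1}\alpha,\alpha\rangle$$
as the bridge between the function-theoretic conditions $(1)$, $(3)$ and the operator-theoretic condition $(2)$.

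The implication $(1) \Rightarrow (3)$ should be a direct computation. Plugging $z=isb$ into the nontangential expansion of $h\in\lnminus$ and subtracting the terms with $|n|\leq 2N-3$ leaves
$$\sum_{|n|=2N-2}\frac{\rho_n}{(isb)^n}+O\left(\frac{1}{s^{2N-1}}\right).$$
Since $|n|=2N-2$ is even, $(is)^{|n|}=(-1)^{N-1}s^{2N-2}$ is real, and combined with the reality of $\rho_n$ and of $b^n$ for $b\in\rtwoplus$, each main term is real. The imaginary part therefore coincides with that of the remainder, so multiplying by $s^{2N-1}$ yields a bounded quantity.

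For the step $(3) \Rightarrow (2)$, I would start from the Type I representation and expand $(A-isb_Y)^{-1}$ as a formal series in $b_Y^{-1}A/(is)$, exploiting that $b_Y^{-1}$ is positive and bounded. The hypothesis $h\in\lnminusone$ guarantees, by the Agler-\McCarthy vector moment theory, the existence of vector moments $R_k(b)$ for $k\leq N-1$. The additional content of $(3)$, namely the boundedness of $s^{2N-1}$ times the imaginary part of the remainder, then forces reality: matching the resolvent expansion term by term against the real partial sum $\sum_{|n|\leq 2N-3}\rho_n/(isb)^n$ shows that each pairing $\langle R_k(b),\alpha\rangle$ must be a real polynomial in $1/b_1,1/b_2$ for $b\in\rtwoplus$, which is precisely the real vector moment condition. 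Varying $b$ throughout $\rtwoplus$ probes the full two-variable structure rather than a single direction.

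For $(2) \Rightarrow (1)$, I would reassemble the nontangential asymptotic expansion of $h$ from the real vector moments. Cauchy-Schwarz style bounds of the form $\abs{\langle R_j(b),R_k(b)\rangle}\leq\|R_j(b)\|\,\|R_k(b)\|$ extend the information encoded in the moments of order at most $N-1$ to residues $\rho_n$ for $|n|\leq 2N-2$, and the reality of those moments forces the reality of the $\rho_n$. The nontangential approach condition $\|z\|\leq c\min{\impart{z_1}}{\impart{z_2}}$ then controls the remainder to $O(\|z\|^{-(2N-1)})$ via spectral calculus for $A$ together with the positivity of $Y$ and $b_Y$. The step expected to be the main obstacle is $(3) \Rightarrow (2)$, where a one-parameter diagonal bound on the imaginary part of $h$ must be upgraded to a structural operator statement about the reality of two-variable vector moments; the subtlety is that different Type I representations yield different triples $(A,Y,\alpha)$, so the reality property must be extracted invariantly from purely function-theoretic input.
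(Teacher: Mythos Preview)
Your argument for $(1)\Rightarrow(3)$ is correct and matches the paper. The problems are in $(3)\Rightarrow(2)$, and they are structural rather than cosmetic.

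First, you have misread the phrase ``real vector $(Y,\alpha)$-moments.'' In this paper ``real'' refers to the argument: $R_k(b)=(b_Y^{-1}A)^{k-1}b_Y^{-1}\alpha$ exists for every $b\in\rtwoplus$, as opposed to ``complex'' moments where $b$ is replaced by $z\in\ctwo$. It has nothing to do with the vectors or the pairings $\langle R_k(b),\alpha\rangle$ being real-valued, so the whole thrust of your $(3)\Rightarrow(2)$ paragraph---extracting reality of coefficients by matching against a real partial sum---is aimed at the wrong target.

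Second, and more seriously, you assert that ``$h\in\lnminusone$ guarantees, by the Agler--\McCarthy vector moment theory, the existence of vector moments $R_k(b)$ for $k\leq N-1$.'' It does not: membership in $\lnminusone$ gives (polynomial) vector moments only to order $N-2$ (Theorem~\ref{hvmstores} with $N$ replaced by $N-1$). Producing $R_{N-1}$ is precisely the content of $(3)\Rightarrow(2)$, and it is a domain question for the unbounded self-adjoint operator $A$: one must show that $X_b\beta_{N-2}\in\mathcal{H}$, where $X_b=b_Y^{-1/2}Ab_Y^{-1/2}$ and $\beta_k=X_b^k b_Y^{-1/2}\alpha$. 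A formal power-series expansion of $(A-isb_Y)^{-1}$ in $b_Y^{-1}A/(is)$ does not make sense here, because $A$ is unbounded and you do not yet know that $\alpha$ lies in the required domain. The paper handles this via the telescoping Lemma~\ref{lem3.100}, which rewrites the bounded imaginary part in $(3)$ exactly as
\[
\left\langle \frac{s^2 X_b^2}{X_b^2+s^2}\,\beta_{N-2},\ \beta_{N-2}\right\rangle,
\]
and then applies the spectral theorem together with monotone convergence to conclude $\int |x|^2|\beta_{N-2}(x)|^2\,d\mu(x)<\infty$, i.e.\ $\beta_{N-1}=X_b\beta_{N-2}$ exists. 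This spectral/monotone-convergence step is the genuine analytic input, and it is absent from your sketch.

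Your outline of $(2)\Rightarrow(1)$ is closer in spirit to the paper's Proposition~\ref{prop3.200}, though note that what makes the residues at level $|n|=2N-2$ real is not a Cauchy--Schwarz bound but the fact that $r_{2N-2}(isb)=\langle AR_{N-1}(isb),R_{N-1}(\overline{isb})\rangle$ is manifestly real for $b\in\rtwoplus$, combined with linear independence of the monomials $1/b^n$.
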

		We prove Theorem \ref{thmone}	in several parts. The implication
		$(1)\Rightarrow	(2)$ is given in Proposition \ref{prop3.100}. The implication
		$(2)\Rightarrow (1)$ is given in Proposition \ref{prop3.200}. The implication
		$(2) \Leftrightarrow (3)$ is given in Proposition \ref{thm3.400}.
		
		Our second result, Theorem  \ref{thmtwoprime}, becomes the following in the language of the Agler-\McCarthy moment theory.
		\begin{thm} \label{thmtwo}
			Let $h\in \p_2.$
			The following are equivalent:
			\begin{enumerate}
				\item $h \in \ln.$
				\item $h \in \lnminus$ and for any type I representation of $h,$
					$$h(z) = \langle(A-z_Y)^{-1}\alpha,\alpha\rangle ,$$
				$A$ has vector $(Y,\alpha)$-moments to order $N-1$ and $R_{N-1}$ is a vector valued polynomial.
				\item $h \in \lnminus$ and there are residues, not necessarily real, $\{\rho_n\}_{n\leq 2N-1}$ such that
					$$h(z) = \sum_{|n|\leq 2N-1} \frac{\rho_n}{z^n}+ o\left(\frac{1}{\|z\|^{2N-1}}\right)$$
				nontangentially.
			\end{enumerate}
		\end{thm}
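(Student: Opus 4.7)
The plan is to translate Theorem \ref{thmtwo} into the Agler-\McCarthy operator-theoretic picture via the type I Nevanlinna representation $h(z) = \langle(A-z_Y)^{-1}\alpha,\alpha\rangle$ and to analyze the asymptotic expansion of $h$ along the ray $z = isb$. The implication $(1)\Rightarrow(3)$ is immediate, since real residues are in particular residues. In any of (1)--(3) one has $h\in\lnminus$, so Theorem \ref{thmone} already supplies real vector moments $R_1,\ldots,R_{N-1}$, and the new content in (2) and (3) concerns only the order $2N-1$ behavior.

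The computational backbone is the spectral expansion. Conjugating by $b_Y^{-1/2}$ produces the self-adjoint operator $T = b_Y^{-1/2}Ab_Y^{-1/2}$ and vector $v = b_Y^{-1/2}\alpha$, so that
\[ h(isb) = \int\frac{1}{\lambda - is}\,d\mu_v(\lambda) \]
against the scalar spectral measure of $T$ at $v$. The existing moments $\int\lambda^k\,d\mu_v = \langle R_{k+1}(b),\alpha\rangle$ are automatically real by self-adjointness, and matching the formal $1/s$-expansion of this Stieltjes transform against the asymptotic conditions in each of (1), (2), (3) reduces each to statements about (i) existence of the formal $(2N-1)$-st order moment data, and (ii) polynomial-in-$1/b$ dependence of this data.

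For $(2)\Rightarrow(1)$, the polynomial formula $R_{N-1}(b)=\sum_{|n|=N-1}\alpha_n/b^n$ is leveraged to split off a closed-form Pick piece that carries the polynomial top-order behavior, leaving a residual Pick function whose moment data satisfies $T^{N-1}v\in\h$; an argument using closedness of $A$ and the spectral measure of $Y$ (taking limits of $b$ along coordinate axes to extract the $\alpha_n$) places each $\alpha_n$ in $\dom{A}$, making this splitting rigorous and giving existence of $R_N(b)=b_Y^{-1}AR_{N-1}(b)$. Dominated convergence on the remaining Stieltjes integral then yields $o(s^{-(2N-1)})$ decay of the remainder along every ray $z=isb$, uniformly on compact $b$-sets, producing the nontangential $o$-expansion defining $\ln$. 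For $(1)\Rightarrow(2)$, the real $o$-expansion at order $2N-1$ identifies the top-order pairing $\langle R_{N-1}(b),\alpha\rangle$ as a scalar polynomial in $1/b_1,1/b_2$; polarizing over the representation data promotes this scalar polynomiality to vector polynomiality of $R_{N-1}$. Finally, $(3)\Rightarrow(1)$ is a reality argument: comparing the expansion of $h$ with that of $\overline{h(\overline{z})}$ and invoking uniqueness of nontangential $o$-expansions forces the residues to be real, reducing (3) to (1).

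The main obstacle is the closure/limit argument in $(2)\Rightarrow(1)$ placing each coefficient $\alpha_n$ into $\dom{A}$: the positive contraction $Y$ may have spectrum touching $\{0,1\}$, so $b_Y^{-1}$ becomes unbounded in precisely the directions used to extract $\alpha_n$, requiring careful use of the joint spectral calculus of $Y$ and $A$ together with the closedness of $A$ to transfer domain membership through the polynomial identity. Once this step is handled, the remainder analysis reduces to standard spectral theory of the self-adjoint operator $T$, and the promotion from the individual ray $z=isb$ to a nontangential estimate follows by continuity of the operator-theoretic quantities in $b$ on compact subsets of $\rtwoplus$.
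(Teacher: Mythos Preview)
Your framework—type~I representation plus spectral analysis along rays $z=isb$—is the right one, but two of your three substantive implications need repair.

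For $(1)\Leftrightarrow(2)$ the paper does not argue directly: it simply invokes the Agler--\McCarthy characterization (Theorem~\ref{hvmstores}) together with Theorem~\ref{thmone}. Your ``main obstacle''—extracting $\alpha_n\in\dom{A}$ by sending $b$ to the coordinate axes and worrying about $\sigma(Y)$ touching $\{0,1\}$—is a phantom. Since $R_{N-1}(b)=\sum_{|n|=N-1}b^{-n}\alpha_n$ is a \emph{finite} sum, one chooses finitely many generic $b\in\rtwoplus$ and inverts the resulting Vandermonde-type system to write each $\alpha_n$ as a finite linear combination of vectors $R_{N-1}(b)\in\dom{A}$. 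No limits are taken, $b_Y^{-1}$ stays bounded, and closedness of $A$ is not needed. This is exactly the ``linear independence of monomials'' step already carried out in Proposition~\ref{prop3.200}.

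The genuine gap is your $(3)\Rightarrow(1)$. From self-adjointness of $A$ and $Y$ in the type~I representation one has $\overline{h(\bar z)}=h(z)$ \emph{identically}, so ``comparing the expansion of $h$ with that of $\overline{h(\bar z)}$'' compares $h$ with itself and yields nothing; and without that identity $\overline{h(\bar z)}$ is not even defined for $z\in\Pi^2$. A reality argument that \emph{does} work is to restrict to the one-variable slices $w\mapsto h(wb)$ for each $b\in\rtwoplus$, apply the one-variable Hamburger/Nevanlinna moment theory to force $\sum_{|n|=2N-1}\rho_n/b^n\in\mathbb{R}$ for all such $b$, and then invoke linear independence of the monomials $b^{-n}$. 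The paper takes a different and more self-contained route: the telescoping Lemma~\ref{lem3.100} gives
\[
(is)^{2N-1}\Bigl[h(isb)+\sum_{k=1}^{2N-1}r_k(isb)\Bigr]
=\bigl\langle[is(X_b-is)^{-1}+1]\beta_{N-1},\beta_{N-1}\bigr\rangle\xrightarrow[s\to\infty]{}0,
\]
and subtracting the assumed expansion in~(3) identifies the (automatically real) scalar moment $r_{2N-1}(b)$ with the polynomial $-\sum_{|n|=2N-1}\rho_n/b^n$, after which Theorem~\ref{hvmstores} gives $h\in\ln$.
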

		Theorem \ref{thmtwo} is also proven in several parts. $(1)\Leftrightarrow	(2)$
		follows directly from the Agler-\McCarthy moment theory, specifically
		 their theorem given here as Theorem \ref{hvmstores}, in the light of
		Theorem \ref{thmone}. The implication $(1)\Leftrightarrow (3)$ is proven as
		Proposition \ref{thm3.700}.

	\subsection{Some facts about moments}
		
		In \cite{am11}, Agler and \McCarthy proved the following:
		\begin{thm}[Agler, \McCarthy \cite{am11}] \label{hmvsrce}
		Let $\h$ be a Hilbert space, let $\alpha \in \h$ and assume that $A$ and $Y$ are operators acting on $\h$, with $A$ an unbounded self-adjoint and $Y$ a positive contraction. The following conditions are equivalent.\\ \\
		(i)\ \ \ $A$ has finite complex vector $(Y,\alpha)$-moments to order $N-1$ and \\
		\hspace*{10mm} for each $l=1,\ldots,N$ there exist vectors $\alpha_n$, $\abs{n}=l$ such that
		\be
		R_l(z)=\sum_{\mid n\mid=l}\frac{1}{z^n}\alpha_n \notag
		\ee
		\ \ \ \ whenever $z \in \ctwo \setminus \set{z}{z_2 \ne 0, z_1/z_2\notin (-\infty, 0]}$.
		\\\\
		(ii)\ \ \ $A$ has finite real vector $(Y,\alpha)$-moments to order $N-1$ and\\
		\hspace*{10mm} for each $l=1,\ldots,N$ there exist vectors $\alpha_n$, $\abs{n}=l$ such that
		\be\label{eq2.4.6}
		R_l(b)=\sum_{\mid n\mid=l}\frac{1}{b^n}\alpha_n
		\ee
		\ \ \ \ whenever $b \in {\rplus}^2$.
		\end{thm}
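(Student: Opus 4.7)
The direction $(i) \Rightarrow (ii)$ is immediate: $\rtwoplus$ sits inside the complex domain of (i), so the complex moments restrict to real moments and the polynomial identity restricts in the obvious way. The substance is entirely in $(ii) \Rightarrow (i)$, which I would prove by analytic continuation.

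First, I would fix the complex domain. Let $\Omega \subseteq \ctwo$ be the open set on which $\zy = Yz_1 + (1-Y)z_2$ is boundedly invertible. By the spectral theorem applied to the positive contraction $Y$, $\zy$ acts on a $y$-eigenspace as multiplication by $yz_1 + (1-y)z_2$, so $\Omega$ is precisely the set of $z$ for which the line segment from $z_1$ to $z_2$ in $\mathbb{C}$ avoids the origin; on $\Omega$ the map $z \mapsto \zyinv$ is then operator-valued holomorphic. An elementary computation identifies $\Omega$, off the coordinate axes, with the slit domain appearing in the statement of (i).

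I would then prove by induction on $l$ that $R_l$ extends to an $\h$-valued holomorphic function on $\Omega$ of the form $\sum_{|n|=l}\alpha_n/z^n$. The base case $l=1$ is direct: $R_1(z) = \zyinv\alpha$ is $\h$-valued holomorphic on $\Omega$ and agrees with $\alpha_{(1,0)}/z_1 + \alpha_{(0,1)}/z_2$ on $\rtwoplus$ by hypothesis, so the identity principle for holomorphic functions on the connected open $\Omega \subseteq \ctwo$ extends the identity to all of $\Omega$. For the inductive step, assume $R_l(z) = \sum_{|n|=l}\alpha_n/z^n$ on $\Omega$. Choose distinct positive reals $u_1,\ldots,u_{l+1}$ and set $b^{(i)} = (1/u_i, 1) \in \rtwoplus$; the resulting Vandermonde system expresses each coefficient $\alpha_n$ as an explicit linear combination of $R_l(b^{(1)}),\ldots,R_l(b^{(l+1)})$. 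Because $R_{l+1}$ exists on $\rtwoplus$, each $R_l(b^{(i)})$ lies in $\dom{A}$, and hence so does each $\alpha_n$ as $\dom{A}$ is a linear subspace. Consequently $AR_l(z) = \sum_{|n|=l}(A\alpha_n)/z^n$ is holomorphic on $\Omega$, so $R_{l+1}(z) = \zyinv AR_l(z)$ is defined and holomorphic on $\Omega$; its agreement on $\rtwoplus$ with the polynomial supplied by (ii) promotes via the identity principle to $\Omega$, completing the induction and hence the proof of (i).

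The main obstacle I anticipate is precisely the step placing each $\alpha_n$ in $\dom{A}$, and the Vandermonde trick handles it without invoking closedness of $A$: the hypothesized polynomial structure of $R_l$ on $\rtwoplus$ forces each coefficient to be a finite linear combination of values $R_l(b^{(i)})$, all of which lie in $\dom{A}$ by the assumed existence of $R_{l+1}$. Everything else reduces to routine analytic continuation on the connected open set $\Omega$.
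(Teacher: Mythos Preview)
The paper does not supply a proof of this statement: it is quoted verbatim as a result of Agler and M\raise.45ex\hbox{c}Carthy from \cite{am11} and then used as a black box in Propositions \ref{prop3.100} and \ref{prop3.200}. There is therefore no in-paper argument to compare against.

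That said, your argument is correct and is the natural one. The only two nontrivial points in $(ii)\Rightarrow(i)$ are (a) showing each coefficient vector $\alpha_n$ of $R_l$ lies in $\dom{A}$, so that $AR_l(z)=\sum_{|n|=l}(A\alpha_n)/z^n$ is defined for complex $z$, and (b) upgrading the polynomial identity from $\rtwoplus$ to the slit domain. Your Vandermonde evaluation at $l+1$ real points handles (a) cleanly: the existence of $R_{l+1}$ on $\rtwoplus$ forces $R_l(b^{(i)})\in\dom{A}$, and the coefficients are finite linear combinations of these values, hence lie in the linear subspace $\dom{A}$. For (b), the identity theorem applies because $\rtwoplus$ is a totally real submanifold of maximal real dimension in the connected open set $\Omega$, so a holomorphic $\h$-valued function vanishing there vanishes identically. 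Two minor remarks: your description of $\Omega$ as the set of $z$ with $[z_1,z_2]\cap\{0\}=\varnothing$ is the domain corresponding to $\mathrm{spec}(Y)=[0,1]$; for general $Y$ the invertibility set of $z_Y$ may be larger, but your $\Omega$ is already connected, contains $\rtwoplus$, and matches the slit domain in (i), which suffices. Also, the displayed domain in (i) as printed in the paper seems to carry a typographical slip (the complement is taken of the wrong set); you have correctly inferred the intended region.
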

		
		We also define scalar moments.
		\begin{defin}
		The $k$th real scalar moment is
			$$r_k(b) = \langle R_{\lceil{k/2}\rceil}(b),AR_{\lfloor{k/2}\rfloor}(b)\rangle . $$
		
		\end{defin}
		Notably, the $r_k$ are always real valued when they are defined and furthermore if the $r_k$
		are given by polynomials in $\frac{1}{b_1}, \frac{1}{b_2}$ then they must have real coefficients.
		
		We will use the following key result about scalar moments.
		\begin{thm}[Agler, \McCarthy \cite{am11}]\label{hvmstores} \label{hmvsmtr}
			A function $h\in \ln$ if and only if $h$ has a type I representation
			$$h(z)= \langle\res \alpha,\alpha\rangle ,$$
			such that $A$ has polynomial vector $(Y,\alpha)$-moments to order $N-1.$
			Moreover,
			$$r_{k}(z)=-\sum_{|n|=k} \frac{\rho_n}{z^n}$$
			where $\rho_n$ are as in Definition \ref{lndfn}.
		\end{thm}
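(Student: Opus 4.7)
The plan is to reduce the statement fiber-by-fiber to a one-variable Nevanlinna moment problem. Fix $b \in \rtwoplus$ and $s > 0$. Since $Y$ is a positive contraction, $b_Y = Yb_1 + (1-Y)b_2 \ge \min{b_1}{b_2}\,I$, so $b_Y^{-1/2}$ is bounded and positive, and we may form the self-adjoint operator $\tilde A(b) := b_Y^{-1/2}Ab_Y^{-1/2}$ on $\h$ together with the vector $\tilde\alpha(b) := b_Y^{-1/2}\alpha \in \h$. Factoring $A - is b_Y = b_Y^{1/2}(\tilde A(b) - is)b_Y^{1/2}$ yields
$$h(isb) \;=\; \langle(\tilde A(b) - is)^{-1}\tilde\alpha(b),\tilde\alpha(b)\rangle,$$
which for fixed $b$ is a scalar one-variable Pick function in $is$. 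A direct computation shows $R_j(b) = b_Y^{-1/2}\tilde A(b)^{j-1}\tilde\alpha(b)$; combined with self-adjointness of $A$ and $b_Y$ and the recursion $R_{k+1}=b_Y^{-1}AR_k$, this gives the shifting identity $\langle R_k,AR_l\rangle = \langle R_{k-1},AR_{l+1}\rangle$, which collapses every scalar moment to $r_j(b) = \langle R_j(b),\alpha\rangle = \langle \tilde A(b)^{j-1}\tilde\alpha(b),\tilde\alpha(b)\rangle$ — the classical Nevanlinna moment of the fiber function $s \mapsto h(isb)$.

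For the forward direction, expand the resolvent as a Neumann series, legitimate nontangentially where $\impart{z_Y} \ge c\|z\|$ yields the uniform bound $\|(A-z_Y)^{-1}\| = O(\|z\|^{-1})$. Pairing with $\alpha$ and truncating at order $2N-1$ produces $h(isb) = -\sum_{j=1}^{2N-1}(is)^{-j}r_j(b) + o(s^{-(2N-1)})$. Polynomiality of each $R_l$, $l\le N-1$, makes the corresponding $r_j$ polynomial in $1/b_1, 1/b_2$; reading $\rho_n$ off from $r_j(b) = -\sum_{|n|=j}\rho_n/b^n$ yields the nontangential expansion required by Definition \ref{lndfn}.

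For the reverse direction I would proceed by induction on $N$, with base case $N=1$ supplied by the two-variable Julia-Carath\'eodory theorem of Agler, Tully-Doyle, and Young. Note first that $h \in \ln$ with $N \ge 1$ forces $sh(is,is)$ bounded, so a type I representation exists. The fiber reduction now converts $h \in \ln$ into: for every $b \in \rtwoplus$, $s\mapsto h(isb)$ has a one-variable Nevanlinna expansion of order $2N-1$ whose coefficients are polynomial in $1/b_1, 1/b_2$. Classical one-variable Nevanlinna theory then produces the iterates $\tilde A(b)^{j-1}\tilde\alpha(b) \in \h$ for $j \le 2N-1$, so each $R_l(b)$ exists for $l \le N$, with $r_j(b) = -\sum_{|n|=j}\rho_n/b^n$ automatically.

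The main obstacle is promoting polynomial scalar data to polynomial vector data: the polynomiality of $\langle R_l(b),\alpha\rangle$ in $1/b_1, 1/b_2$ does not by itself force the Hilbert-space-valued map $b \mapsto R_l(b)$ to equal $\sum_{|n|=l}\alpha_n/b^n$. My plan is to enlarge the pool of test vectors via the shifting identity: $\langle R_l(b), AR_k(b)\rangle = r_{l+k}(b)$ is polynomial for all admissible $k$, so $\langle R_l(b), v\rangle$ is polynomial in $1/b_1, 1/b_2$ for every $v$ in a dense subspace generated by the $R_k$'s at varying base points. Theorem \ref{hmvsrce} then converts this polynomial form on $\rtwoplus$ into the global complex polynomial form of $R_l$, completing the induction.
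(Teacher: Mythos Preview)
The paper does not prove this theorem. It is stated as a quotation from Agler--\McCarthy \cite{am11} and used as a black box (together with Theorem \ref{hmvsrce}) throughout Section 3. There is therefore no ``paper's own proof'' to compare against; the present paper builds on this result rather than establishing it.

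On the substance of your sketch: the fiber-by-fiber reduction $h(isb)=\langle(\tilde A(b)-is)^{-1}\tilde\alpha(b),\tilde\alpha(b)\rangle$ and the identification of $r_j(b)$ with the classical one-variable moments of the fiber function are correct and indeed underlie Lemma \ref{lem3.100} of the present paper. Two points deserve care. First, in the forward direction you invoke a ``Neumann series'' for $(A-z_Y)^{-1}$, but $A$ is unbounded, so no such series converges; what is actually available is the finite telescoping identity $(A-w)^{-1}=-\sum_{j=0}^{M-1}w^{-j-1}A^j+w^{-M}(A-w)^{-1}A^M$ on the domain of $A^M$, which is exactly the mechanism behind Lemma \ref{lem3.100}. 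Second, in the reverse direction you correctly isolate the real difficulty: polynomiality of the scalars $r_j(b)$ does not by itself force the vectors $R_l(b)$ to be polynomial in $1/b_1,1/b_2$. Your proposed fix---testing $R_l(b)$ against the span of $AR_k(b')$ for varying $b'$---does not obviously produce a \emph{dense} set of test vectors in $\h$, and even weak polynomiality against a dense set would still leave you to manufacture the actual coefficient vectors $\alpha_n$. In the original Agler--\McCarthy argument this step is handled by a more delicate structural analysis, and it is not clear your density idea closes the gap as stated.
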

		
		The following telescoping lemma gives a formula that will let us prove the main results.
		\begin{lem}\label{lem3.100}
			Let $h\in\p_2$ with type I representation $$h(z) = \langle\res \alpha,\alpha\rangle ,$$
			be such that $A$ has  vector $(Y,\alpha)$-moments to order $N-1$ and
			scalar moments up to order $2N-1.$
			Let $b\in \rtwoplus.$ Let $X_b = b_Y^{-1/2}Ab_Y^{-1/2}.$ Let $\beta_k = X_b^{k}b_Y^{-1/2}\alpha.$
			Then,
				$$h(isb) + \sum^{2N-1}_{k=1} r_{k}(isb) =
				\cofrac{(is)^{2(N-1)}}{\langle[(X_b - is)^{-1} + (is)^{-1}] \beta_{N-1},\beta_{N-1}\rangle }.$$
		\end{lem}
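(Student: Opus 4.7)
The plan is to exploit a finite Neumann expansion of the resolvent $(X_b - is)^{-1}$ and then reorganize the pieces using self-adjointness of $X_b$ to match the two sides of the asserted identity. I would first reduce to the symmetrized operator $X_b$: since at $z = isb$ one has $z_Y = isb_Y$, the factorization $A - isb_Y = b_Y^{1/2}(X_b - is)b_Y^{1/2}$ immediately yields $h(isb) = \langle(X_b - is)^{-1}\beta_0, \beta_0\rangle$. Then I would iterate the one-step identity $(X_b - is)^{-1} = -(is)^{-1} + (is)^{-1}X_b(X_b - is)^{-1}$ exactly $2N-1$ times to obtain
\[
(X_b - is)^{-1} = -\sum_{k=0}^{2N-2}\frac{X_b^k}{(is)^{k+1}} + \frac{X_b^{2N-1}}{(is)^{2N-1}}(X_b - is)^{-1},
\]
and pair both sides against $\beta_0$.

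For the remainder term, I would symmetrically split $X_b^{2N-1} = X_b^{N-1}\cdot X_b\cdot X_b^{N-1}$, using self-adjointness of $X_b$ and its commutativity with its own resolvent to shift half the powers into each slot, yielding $\langle X_b(X_b - is)^{-1}\beta_{N-1}, \beta_{N-1}\rangle/(is)^{2N-1}$. The algebraic identity $X_b(X_b - is)^{-1} = I + is(X_b - is)^{-1}$ then rewrites this as exactly the asserted right-hand side $\langle[(X_b - is)^{-1} + (is)^{-1}]\beta_{N-1}, \beta_{N-1}\rangle/(is)^{2(N-1)}$.

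For the finite Neumann sum, I would identify each $\langle X_b^{k-1}\beta_0, \beta_0\rangle$ with $r_k(b)$ via the relations $R_j(b) = b_Y^{-1/2}\beta_{j-1}$ and $AR_j(b) = b_Y^{1/2}\beta_j$ together with symmetric splittings of $X_b^{k-1}$, then invoke the natural homogeneity $r_k(isb) = r_k(b)/(is)^k$ (which matches the polynomial formula from Theorem \ref{hvmstores}) to collapse the sum into $-\sum_{k=1}^{2N-1}r_k(isb)$. The main obstacle is the bookkeeping at the boundary index $k = 2N-1$, where $\beta_N$ need not be defined; the finite-expansion construction is tuned precisely so that only $\beta_{N-1}$ and the full resolvent applied to it ever appear, both of which are legitimate under the stated hypotheses on vector and scalar moments.
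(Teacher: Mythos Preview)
Your approach is correct and amounts to a direct, closed-form version of what the paper does by induction on $N$. The paper verifies the base case $N=1$ and then shows that adding the two terms $r_{2N}(isb)$ and $r_{2N+1}(isb)$ to the right-hand side promotes $\beta_{N-1}$ to $\beta_N$ through a short chain of resolvent identities; your finite Neumann expansion is precisely the unrolled form of that recursion, and the identification of the partial sums with the scalar moments and of the remainder with the asserted right-hand side is the same algebra either way.

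One point worth tightening: the operator-level expansion $\sum_{k=0}^{2N-2} X_b^{k}/(is)^{k+1}$ applied to $\beta_0$ is not literally well-defined as a vector identity, since under the stated hypotheses $\beta_0$ is only known to lie in $\mathrm{Dom}(X_b^{N-1})$, not in $\mathrm{Dom}(X_b^{2N-2})$. Your symmetric splitting repairs this at the level of inner products (equivalently, one passes through the spectral theorem and works with the scalar moments $\int x^{k}\,|\beta_0(x)|^2\,d\mu$, which are finite for $k\le 2N-2$ by Cauchy--Schwarz), and the remainder term is handled the same way. The paper's inductive organization sidesteps this because at each step only $\beta_{N-1}$, $\beta_N$, and the resolvent appear explicitly, so no formally undefined high powers of $X_b$ acting on $\beta_0$ are ever written down. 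Both routes are valid; yours is more transparent about where the formula comes from, while the paper's keeps the domain bookkeeping implicit.
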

		\begin{proof}
			Note $b_Y^{-1/2}$ exists since $b_Y$ is strictly positive.
			Note that the expressions for $r_{2k-1}$ and $r_{2k}$ in the notation of the lemma become:
				$$r_{2k-1}(isb) = (is)^{-(2k-1)}\langle\beta_k,\beta_k\rangle $$
				$$r_{2k}(isb) = (is)^{-2k}\langle\beta_{k-1},\beta_k\rangle .$$
			The proof will go by induction.
			When $N=1,$
				$$h(isb) + r_{1}(isb) = \langle(A-isb_Y)^{-1}\alpha,\alpha\rangle  + \langle(isb_Y)^{-1}\alpha,\alpha\rangle $$
				$$= \langle(X_b-is)^{-1}\beta_0,\beta_0\rangle  + \langle(is)^{-1} \beta_0,\beta_0\rangle $$
				$$= \langle[(X_b-is)^{-1} + (is)^{-1}]\beta_0,\beta_0\rangle .$$
			So we are done.
			Now suppose, by induction,
				$$h(isb) + \sum^{2N-1}_{k=1} r_{2k-1}(isb) = \cofrac{(is)^{2(N-1)}}{\langle[(X_b - is)^{-1} + (is)^{-1}] \beta_{N-1},\beta_{N-1}\rangle} $$
			and additionally we have vector $(Y,\alpha)$-moments to order $N$ and scalar $(Y,\alpha)$-moments to order $2N+1.$ 
			So,
			\begin{align*}
				& h(isb) + \sum^{2N+1}_{k=1} r_{k}(isb)\\
				&=
				\cofrac{(is)^{2(N-1)}}{\langle[(X_b - is)^{-1} + (is)^{-1}] \beta_{N-1},\beta_{N-1}\rangle}
				+ \cofrac{(is)^{2N}}{\langle\beta_{N-1},\beta_N\rangle}
				+ \cofrac{(is)^{2N+1}}{\langle\beta_{N},\beta_N\rangle} \\
				& =  \cofrac{(is)^{2(N-1)}}
				{\langle[(X_b - is)^{-1} + (is)^{-1} + (is)^{-2}X_b] \beta_{N-1},\beta_{N-1}\rangle }
				+ \cofrac{(is)^{2N+1}}{\langle\beta_{N},\beta_N\rangle} \\
				& =  \cofrac{(is)^{2(N-1)}}{\langle[(X_b - is)^{-1} + (is)^{-2}(X_b+is)] \beta_{N-1},\beta_{N-1}\rangle }
				 + \cofrac{(is)^{2N+1}}{\langle\beta_{N},\beta_N\rangle} \\
				& =  \cofrac{(is)^{2(N-1)}}{\langle[(X_b - is)^{-1} + (is)^{-2}(X_b+is)(X_b-is)(X_b-is)^{-1}] \beta_{N-1},\beta_{N-1}\rangle }
				 + \cofrac{(is)^{2N+1}}{\langle\beta_{N},\beta_N\rangle} \\
				& =  \cofrac{(is)^{2(N-1)}}
				{\langle[(X_b - is)^{-1} + ((is)^{-2}X_b^2 - 1)(X_b-is)^{-1}] \beta_{N-1},\beta_{N-1}\rangle}
				 + \cofrac{(is)^{2N+1}}{\langle\beta_{N},\beta_N\rangle} \\
				& =  \cofrac{(is)^{2N}}{\langle[X_b^2(X_b-is)^{-1}] \beta_{N-1},\beta_{N-1}\rangle }
				+ \cofrac{(is)^{2N+1}}{\langle\beta_{N},\beta_N\rangle}  \\
				& =  \cofrac{(is)^{2N}}{\langle[(X_b-is)^{-1}] X_b\beta_{N-1},X_b\beta_{N-1}\rangle} 
				+ \cofrac{(is)^{2N+1}}{\langle\beta_{N},\beta_N\rangle } \\
				& =  \cofrac{(is)^{2N}}{\langle[(X_b-is)^{-1} +(is)^{-1}] \beta_{N},X_b\beta_{N}\rangle}.
			\end{align*}
			This concludes the proof.
		\end{proof}
\section{Proofs of results}
	\subsection{Proof of operator theoretic results}
		First we endeavor to prove the equivalence of $(1)$ and $(2)$ in Theorem \ref{thmone}. We separate the proof \black into two parts.
		\begin{prop}\label{prop3.100}
			Let $h\in\p_2$ with type I representation $$h(z) = \langle\res \alpha,\alpha\rangle .$$ If
			$h\in\lnminus$, then $h\in\lnminusone$ and $A$ has vector $(Y,\alpha)$-moments to order $N-1.$ 
		\end{prop}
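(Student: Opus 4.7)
The plan is to handle the two conclusions separately. The inclusion $h\in\lnminus \Rightarrow h\in\lnminusone$ is immediate by nontangential truncation: any monomial $\rho_n/z^n$ with $\abs{n}=2N-2$ is nontangentially bounded by $C/\norm{z}^{2N-2} = o(1/\norm{z}^{2N-3})$, so the $\abs{n}=2N-2$ layer of the $\lnminus$ expansion is absorbed into the $\lnminusone$ remainder.

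For the vector moments statement, I would fix $b\in\rtwoplus$ and reduce to a one-variable slice. With $X_b = b_Y^{-1/2}Ab_Y^{-1/2}$ and $\beta_0 = b_Y^{-1/2}\alpha$ as in Lemma \ref{lem3.100}, the function $g_b(z):= h(zb)=\langle(X_b-z)^{-1}\beta_0,\beta_0\rangle$ is a one-variable Pick function whose representing measure $\mu_b$ is the spectral measure of $\beta_0$ under $X_b$. Existence of the top vector moment $R_{N-1}(b)=b_Y^{-1/2}X_b^{N-2}\beta_0$ is equivalent to $\int t^{2(N-2)}\,d\mu_b(t)<\infty$; once this is secured, the lower $R_k$ follow from the inclusion $\dom(X_b^{N-2})\subset\dom(X_b^{k-1})$ for $k\leq N-1$.

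Since $\norm{isb}/\min{\impart{isb_1}}{\impart{isb_2}}=\norm{b}/\min{b_1}{b_2}$ is constant in $s$, the ray $z=isb$ is nontangential, and the $\lnminus$ hypothesis specializes to
$$g_b(is) = \sum_{k=0}^{2N-2}\frac{c_k(b)}{(is)^k} + O\!\left(\frac{1}{s^{2N-1}}\right),$$
with $c_k(b) = \sum_{\abs{n}=k}\rho_n/b^n \in \mathbb{R}$. Taking imaginary parts kills the even-order (real) terms and leaves $\impart{g_b(is)}$ minus a real odd polynomial in $1/s$ of degree $\leq 2N-3$ equal to $O(1/s^{2N-1})$. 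Combined with the identity $\impart{g_b(is)} = \int s/(t^2+s^2)\,d\mu_b(t)$, a standard Fatou/Poisson-kernel argument from one-variable Nevanlinna--Hamburger moment theory converts this into $\int t^{2N-2}\,d\mu_b<\infty$, which is more than enough to give $\beta_{N-2}\in\h$ and hence $R_{N-1}(b)$; since $b$ was arbitrary, $A$ has vector $(Y,\alpha)$-moments to order $N-1$.

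The main obstacle will be this last step: extracting the moment bound on $\mu_b$ from the $O(1/s^{2N-1})$ imaginary-part asymptotic of $g_b(is)$. This is essentially the one-variable moment theorem on $\Pi$, referenced via Theorem \ref{NevanlinnaRep} and the discussion following it, and the crucial feature is that the big-$O$ rather than little-$o$ bound corresponds to one extra order of imaginary-part regularity, which is the defining characteristic of the intermediate L\"owner class and exactly what powers the upgrade from $\lnminusone$ to $\lnminus$.
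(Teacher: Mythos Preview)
Your proposal is correct and follows essentially the same strategy as the paper: restrict to the nontangential ray $z=isb$, rewrite $h(isb)=\langle (X_b-is)^{-1}\beta_0,\beta_0\rangle$ via the self-adjoint operator $X_b=b_Y^{-1/2}Ab_Y^{-1/2}$, and extract the needed moment bound by a spectral/monotone-convergence argument. The only structural difference is that the paper makes this computation explicit --- it first invokes $h\in\lnminusone$ together with Theorem~\ref{hmvsmtr} to get moments to order $N-2$, applies the telescoping Lemma~\ref{lem3.100} at that level to obtain a closed formula for the remainder, then takes a real part and runs the monotone convergence theorem by hand --- whereas you package exactly that one-variable calculation as a citation to classical Nevanlinna--Hamburger moment theory for the slice $g_b$. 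Your route is shorter if one is willing to import the one-variable result; the paper's route is self-contained and reuses its own Lemma~\ref{lem3.100}.
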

		\begin{proof}
			Suppose $h\in\lnminus.$ Then, $h\in\lnminusone.$
			We will show $A$ has real vector $(Y,\alpha)$-moments to order $N-1.$
			This is sufficient by Theorem \ref{hmvsrce}.
			By Theorem \ref{hmvsmtr},
				$$h(isb) + \sum^{2N-3}_{k=1} r_{k}(isb)= h(isb) - \sum_{|n|\leq 2N-3} \frac{\rho_n}{(isb)^n},$$
			and $A$ has $(Y,\alpha)$-moments to order $N-2.$ 
			So by Lemma \ref{lem3.100}, adopting its notation,
				$$(is)^{2N-1}\left[
				h(isb) - \sum_{|n|\leq 2N-2} \frac{\rho_n}{(isb)^n}\right] = (is)^3\langle[(X_b - is)^{-1} + (is)^{-1}] \beta_{N-1},\beta_{N-1}\rangle  - is\sum_{|n|= 2N-2} \frac{\rho_n}{b^n}.$$
			Since $h\in \lnminus,$ for some $C> 0$,
				$$\abs{(is)^{2N-1}\left[
				h(isb) - \sum_{|n|\leq 2N-2} \frac{\rho_n}{(isb)^n}\right]}\leq C$$
			So,
				$$\abs{(is)^3\langle[{(X_b-is)}^{-1} - (is)^{-1}] \beta_{N-2},\beta_{N-2}\rangle  - (is)\sum_{|n|=2N-2} \frac{\rho_n}{b^n}} \leq C.$$
			Taking the \black real part preserves this inequality. Thus,
				$$\abs{\text{Re } (is)^3\langle[{(X_b-is)}^{-1} + (is)^{-1}] \beta_{N-2},\beta_{N-2}\rangle } \text{  }\leq C.$$
			Simplifying,
				$$\abs{\text{Re } (is)^2\langle\frac{X_b}{X_b-is} \beta_{N-2},\beta_{N-2}\rangle } \text{  }\leq C$$
				$$\abs{\text{Re } s^2\langle\frac{X_b^2+isX_b}{X_b^2+s^2} \beta_{N-2},\beta_{N-2}\rangle } \text{  }\leq C$$
				$$\left\langle\frac{s^2X_b^2}{X_b^2+s^2} \beta_{N-2},\beta_{N-2}\right\rangle  \text{  }\leq C$$
			By the spectral theorem, there is a measure $\mu$ so that,
				$$\left\langle\frac{s^2X_b^2}{X_b^2+s^2} \beta_{N-2},\beta_{N-2}\right\rangle =\int \frac{s^2x^2}{x^2+s^2} \abs{\beta_{N-2}(x)}^2 d\mu(x).$$
			Note the integrand is monotone increasing in $s,$ so apply monotone convergence theorem to get
				$$\int |x\beta_{N-2}(x)|^2 d\mu(x) = \int |\beta_{N-1}(x)|^2 d\mu(x)$$
			exists and is finite.
			So $X_b\beta_{N-2}\in\text{Dom } X_b.$ That is, $(b_Y^{-1/2}Ab_Y^{-1/2})^{N-1}b_Y^{-1/2}\alpha\in\text{Dom }b_Y^{-1/2}Ab_Y^{-1/2}.$
			So, $(Ab_Y^{-1})^{N-1} \in \text{Dom }A.$
			Thus, $A$ has $(Y,\alpha)$-moments to order $N-1.$
		\end{proof}
		
		The other direction goes as follows.
		\begin{prop} \label{prop3.200}
			Let $h\in\p_2$ with type I representation $$h(z) = \langle\res \alpha,\alpha\rangle .$$ Then, if
			$h\in\lnminusone$ and $A$ has vector $(Y,\alpha)$-moments to order $N-1,$ then 
			$h\in\lnminus.$
		\end{prop}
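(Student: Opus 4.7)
The plan is to reverse the computation in Proposition \ref{prop3.100}. Since $A$ has vector $(Y,\alpha)$-moments to order $N-1$, the vectors $\beta_k = X_b^k b_Y^{-1/2}\alpha$ for $k \le N-1$ and the scalar moments $r_k(isb)$ for $k \le 2N-1$ are available, so I can apply Lemma \ref{lem3.100}, obtaining
$$h(isb) + \sum_{k=1}^{2N-1}r_k(isb) = \frac{\langle[(X_b - is)^{-1} + (is)^{-1}]\beta_{N-1},\beta_{N-1}\rangle}{(is)^{2N-2}}.$$
From $h \in \lnminusone$ and Theorem \ref{hmvsmtr}, the lower scalar moments $r_k(isb) = -\sum_{|n|=k}\rho_n/(isb)^n$ for $k \le 2N-3$ are the known residues of the $\lnminusone$ expansion, and I will \emph{define} the missing residues for $|n| = 2N-2$ by the relation $\sum_{|n|=2N-2}\rho_n/b^n := -r_{2N-2}(b)$.

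The central estimate comes from the trivial splitting
$$\langle[(X_b-is)^{-1} + (is)^{-1}]\beta_{N-1},\beta_{N-1}\rangle = \frac{\|\beta_{N-1}\|^2}{is} + \langle(X_b-is)^{-1}\beta_{N-1},\beta_{N-1}\rangle.$$
After division by $(is)^{2N-2}$, the first summand is exactly $r_{2N-1}(isb)$, which absorbs into the left-hand side of the Lemma \ref{lem3.100} identity. The second summand is controlled by the self-adjoint resolvent bound $\|(X_b - is)^{-1}\| \le 1/s$, so $|\langle(X_b - is)^{-1}\beta_{N-1},\beta_{N-1}\rangle| \le \|\beta_{N-1}\|^2/s$. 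Multiplying through by $(is)^{2N-1}$ will give
$$\left|(is)^{2N-1}\left[h(isb) - \sum_{|n|\le 2N-2}\frac{\rho_n}{(isb)^n}\right]\right| = \left|is\langle(X_b-is)^{-1}\beta_{N-1},\beta_{N-1}\rangle\right| \le \|\beta_{N-1}\|^2,$$
which is the $\lnminus$ bound along the direction $z = isb$, $b \in \rtwoplus$.

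The hard part will be verifying that the residues $\rho_n$ for $|n|=2N-2$ defined above are real and that the combination $b \mapsto \sum_{|n|=2N-2}\rho_n/b^n$ is a homogeneous polynomial in $1/b_1, 1/b_2$ of degree $2N-2$. Reality of $r_{2N-2}(b) = \langle R_{N-1}, AR_{N-1}\rangle$ will follow from self-adjointness of $A$. Polynomiality is to be extracted from Theorem \ref{hmvsrce}: the vector moments hypothesis to order $N-1$, combined with the polynomial form of $R_1,\ldots,R_{N-1}$ inherited from $h \in \lnminusone$ via Theorem \ref{hmvsmtr}, should upgrade the sequence $R_1,\ldots,R_N$ to polynomial vector moments, so that $r_{2N-2}(b) = \langle b_Y^{1/2}R_{N-1}, b_Y^{1/2}R_N\rangle$ is visibly polynomial in $1/b_1, 1/b_2$ of homogeneous degree $2N-2$ with real coefficients, completing the identification of the $\rho_n$ and hence the conclusion $h \in \lnminus$.
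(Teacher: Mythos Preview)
Your argument has a genuine gap: you only establish the remainder bound along rays $z=isb$ with $b\in\rtwoplus$, whereas membership in $\lnminus$ requires the $O(\|z\|^{-(2N-1)})$ estimate \emph{nontangentially}, i.e.\ for all $z$ with $\|z\|\le c\,\min\{\impart{z_1},\impart{z_2}\}$. Such $z$ may have nonzero real parts (e.g.\ $z=(1+is,is)$), so the rays $isb$ do not exhaust any nontangential approach region. Lemma \ref{lem3.100} is tied to the self-adjoint operator $X_b=b_Y^{-1/2}Ab_Y^{-1/2}$ and the resolvent bound $\|(X_b-is)^{-1}\|\le 1/s$; there is no obvious substitute for general $z$, and you cannot close the gap by invoking the implication $(3)\Rightarrow(1)$ of Theorem \ref{thmone}, since the proof of that implication (Proposition \ref{thm3.400}) uses the very proposition you are trying to prove. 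The paper avoids this by using a telescoping identity valid for all $z\in\Pi^2$, namely
\[
h(z)-\sum_{l=1}^{2N-2} r_l(z)=\big\langle (A-z_Y)^{-1}AR_{N-1}(z),\,R_{N-1}(\bar z)\big\rangle,
\]
and then bounding with $\|z\|\,\|(A-z_Y)^{-1}\|$ together with $\|z\|^{N-1}\|AR_{N-1}(z)\|$ and $\|z\|^{N-1}\|R_{N-1}(z)\|$, each of which is nontangentially bounded.

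A secondary issue: your plan to deduce polynomiality of $r_{2N-2}$ by first showing $R_N$ is a vector polynomial is incorrect. Having vector moments to order $N-1$ does not force $R_N$ to be polynomial; the example in Section \ref{lnneqlnminus} exhibits exactly this failure. What does work (and what the paper does) is to observe that $R_{N-1}(z)=\sum_{|n|=N-1}\alpha_n/z^n$ is polynomial by $h\in\lnminusone$ and Theorem \ref{hmvsmtr}, that each $\alpha_n\in\dom A$ by the moment hypothesis, and hence $AR_{N-1}(z)=\sum_{|n|=N-1}A\alpha_n/z^n$ is polynomial; then $r_{2N-2}(z)=\langle AR_{N-1}(z),R_{N-1}(\bar z)\rangle$ is a polynomial in $1/z$ with real coefficients.
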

		\begin{proof}
			Suppose $h\in \lnminusone$ and $h$ has $(Y,\alpha)$-moments to order $N-1.$
			By Theorem \ref{hmvsrce}, \black
				$$z_{Y}^{-1}(Az_Y^{-1})^{N-2}\alpha=R_{N-1}(z) = \sum_{|n|=N-1} \frac{1}{z^n}\alpha_n.\black $$
			Since we have $(Y,\alpha)$-moments to order $N-1,$
				$$(Az_Y^{-1})^{N-1}\alpha=AR_{N-1}(z) = A\sum_{|n|=N-1} \frac{1}{z^n}\alpha_n$$
			is well defined. Note, by linear independence of monomials, each $\alpha_n\in \text{Dom} A.$ Thus,
				$$(Az_Y^{-1})^{N-1}\alpha=AR_{N-1}(z) = \sum_{|n|=N-1} \frac{1}{z^n}A\alpha_n.$$
			So, 
			\begin{align*}
				r_{2N-2}(z) &=  \langle AR_{N-1}(z),R_{N-1}(\overline{z})\rangle  \\
				&=  \sum_{|m|=N-1}\sum_{|n|=N-1} \frac{1}{z^nz^m}\langle A\alpha_n,\alpha_m\rangle =\sum_{|n|=2N-2} \frac{\rho_n}{z^{n}},
			\end{align*}
			where $\rho_{n} = \sum_{n+m=2N-2} \langle A\alpha_n,\alpha_m\rangle .$
			Note that if $b\in \rtwoplus$,
				$$r_{2N-2}(isb)=\frac{1}{(is)^{2N-2}}=\sum_{|n|=2N-2} \frac{\rho_n}{z^{n}}$$
			is real valued. Thus, by linear independence \black of monomials, each $\rho_n$ is real valued. So
				$$h(z)-\sum^{2N-2}_{l=1} r_l(z) = \langle(A-z_Y)^{-1}AR_{N-1}(z),R_{N-1}(\overline{z})\rangle $$
				$$\|z\|^{2N-1}(h(z)-\sum^{2N-2}_{l=1} r_l(z)) = \|z\|^{2N-1}\langle(A-z_Y)^{-1}AR_{N-1}(z),R_{N-1}(\overline{z})\rangle .$$
			Now notice
				$$\|z\|^{2N-1}|(h(z)-\sum^{2N-2}_{l=1} r_l(z))| \leq \|z\|\|(A-z_Y)^{-1}\|\|z\|^{N-1}\|AR_{N-1}(z)\|\|z\|^{N-1}\|R_{N-1}(z)\|$$
			is nontangentially bounded.
			So, $h\in \lnminus.$
		\end{proof}
		This concludes the proof of the equivalence of $(1)$ and $(2)$ in Theorem \ref{thmone}.
	
	\subsection{Proof of function theoretic results}
	We now seek to prove the implication $(1) \Leftrightarrow (3)$ in Theorem \ref{thmone}
	and Theorem \ref{thmtwo}.
	
	We begin with the following lemma which will allow us to prove $(1) \Leftrightarrow (3)$
	for Theorem \ref{thmone}.
		\begin{lem}\label{thm3.500}
			Let $h\in\p_2.$ Suppose $h\in\lnminusone$ and for each $b\in \rtwoplus,$ for large $s,$
				$$(is)^{2N-1}\text{Im }\left[h(isb)-\sum_{|n|\leq{2N-3}} \frac{\rho_n}{(isb)^{n}}\right]$$
			 is bounded.
			Then \black 
				$$r_{2N-1}(b) = \lim_{s\rightarrow\infty}  (is)^{2N-1}\text{Im }\left[h(isb)-\sum_{|n|\leq{2N-3}} \frac{\rho_n}{(isb)^{n}}\right].$$
		\end{lem}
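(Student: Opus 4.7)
The plan is to exploit the telescoping identity of Lemma~\ref{lem3.100}, applied at the \emph{previous} level, namely with $N$ replaced by $N-1$. Because $h\in\lnminusone$, Proposition~\ref{prop3.100} applied at that level guarantees that $A$ has vector $(Y,\alpha)$-moments to order $N-2$ and scalar moments to order $2N-3$, so the hypotheses of the telescoping lemma are satisfied one step down. Invoking Theorem~\ref{hmvsmtr} to rewrite the scalar-moment sum as the residue sum, the identity reads
\begin{equation*}
h(isb) - \sum_{|n|\leq 2N-3}\frac{\rho_n}{(isb)^n} \;=\; \frac{\langle[(X_b-is)^{-1}+(is)^{-1}]\beta_{N-2},\beta_{N-2}\rangle}{(is)^{2N-4}}.
\end{equation*}

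Next, I would collapse the bracketed operator via $(X_b-is)^{-1}+(is)^{-1}=X_b/(is(X_b-is))$ and then rationalize using $(X_b-is)^{-1}=(X_b+is)(X_b^2+s^2)^{-1}$. By the spectral theorem this separates the inner product into a real piece, involving $X_b^2(X_b^2+s^2)^{-1}$, and a piece proportional to $is$, involving $X_b(X_b^2+s^2)^{-1}$. Extracting the imaginary part then isolates exactly the first of these, in the same manner as the spectral manoeuvre already carried out in the proof of Proposition~\ref{prop3.100}.

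After multiplying by $(is)^{2N-1}$ and tracking the powers of $i$, the resulting expression reduces, up to sign, to $s^{2}\langle X_b^{2}(X_b^{2}+s^{2})^{-1}\beta_{N-2},\beta_{N-2}\rangle$. The integrand $s^{2}x^{2}/(x^{2}+s^{2})$ in the associated spectral integral is monotone increasing in $s$ and dominated by $x^{2}$, and the boundedness hypothesis furnishes a uniform-in-$s$ bound, so monotone convergence identifies the limit as $\|X_b\beta_{N-2}\|^{2}=\|\beta_{N-1}\|^{2}$. Comparing with the formula $r_{2N-1}(isb)=(is)^{-(2N-1)}\|\beta_{N-1}\|^{2}$ extracted from the proof of Lemma~\ref{lem3.100} confirms that this limit is precisely $r_{2N-1}(b)$.

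The principal technical nuisance will be the bookkeeping of powers of $i$ and of signs through the imaginary-part extraction; the analytic content is the same spectral monotone-convergence trick used for Proposition~\ref{prop3.100}, so no new estimates should be needed.
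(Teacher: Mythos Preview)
Your proposal is correct and follows essentially the same route as the paper: invoke the telescoping identity of Lemma~\ref{lem3.100} at level $N-1$ (the moments to order $N-2$ being available since $h\in\lnminusone$), simplify the bracketed resolvent expression to $-\langle s^2X_b^2(X_b^2+s^2)^{-1}\beta_{N-2},\beta_{N-2}\rangle$, and then apply the spectral theorem together with monotone convergence exactly as in Proposition~\ref{prop3.100} to obtain $-\|\beta_{N-1}\|^2=r_{2N-1}(b)$. The only cosmetic difference is that the paper tacitly obtains the order-$(N-2)$ moments from Theorem~\ref{hmvsmtr} rather than from Proposition~\ref{prop3.100}, but both justifications are valid.
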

		\begin{proof}
		Suppose 
			$h\in\lnminusone$ and for each $b\in \rtwoplus,$
			$$J_b(s) := s^{2N-1}\text{Im }\left[h(isb)-\sum_{|n|\leq{2N-3}} \frac{\rho_n}{(isb)^{n}}\right]$$
		is bounded.
		Let $h$ have a \black type I representation  $h(z) = \langle\res \alpha,\alpha\rangle .$
		We will show $A$ has vector $(Y,\alpha)$-moments to order $N-1$ and apply the equivalence of 1 and 2 in Theorem \ref{thmone}.
		By Lemma \ref{lem3.100}, adopting its notation,
			$$h(isb)-\sum_{|n|\leq{2N-3}} \frac{\rho_n}{(isb)^{n}} = (is)^{-2(N-2)}\langle[(X_b - is)^{-1} + (is)^{-1}] \beta_{N-2},\beta_{N-2}\rangle .$$
		With this substitution,
			$$J_b(s)
			=(is)^{2N-1}\text{Im }
			(is)^{-2(N-2)}\langle[(X_b - is)^{-1} + (is)^{-1}] \beta_{N-2},\beta_{N-2}\rangle .$$
		Simplified, we obtain
			$$J_b(s)=\langle\frac{-s^2X_b^2}{X_b^2+s^2}\beta_{N-2},\beta_{N-2}\rangle .$$
		Applying the spectral theorem and monotone convergence theorem as in the proof of the equivalence on $(1)$ and $(2)$ in Theorem \ref{thmone}, we get
			$$\lim_{s\rightarrow\infty}  (is)^{2N-1}\text{Im }\left[h(isb)-\sum_{|n|\leq{2N-3}} \frac{\rho_n}{(isb)^{n}}\right] = -\|\beta_{N-1}\|^2= r_{2N-1}(b).$$
		\end{proof}
		
		We now prove $(1) \Leftrightarrow (3)$
	for Theorem \ref{thmone}.
		\begin{prop}\label{thm3.400}
			Let $h\in\p_2.$ Then,
			$h\in\lnminus$ if and only if $h\in\lnminusone$ and for each $b\in \rtwoplus,$ for large $s,$
			$$s^{2N-1}\text{Im }\left[h(isb)-\sum_{|n|\leq{2N-3}} \frac{\rho_n}{(isb)^{n}}\right]$$
			is bounded.
		\end{prop}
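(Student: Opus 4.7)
The plan is to prove the two implications separately, each reducing to material already assembled. The forward direction falls out of the defining asymptotic of $\lnminus$, and the backward direction is a consequence of Lemma \ref{thm3.500} together with Proposition \ref{prop3.200}.

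For the forward direction, assume $h \in \lnminus$. Then $h \in \lnminusone$ by definition, and real $\rho_n$ exist for $|n| \le 2N-2$ giving the $O(1/\|z\|^{2N-1})$ expansion. The key point is to evaluate this expansion along the ray $z = isb$ for $b \in \rtwoplus$; this ray is nontangential with an aperture depending only on $b$ since $\|isb\| = s\|b\|$ and $\min(\mathrm{Im}(isb_1),\mathrm{Im}(isb_2)) = s\min(b_1,b_2)$. The observation that makes the forward direction work is a parity computation: for the top-order multi-indices $|n| = 2N-2$, the quantity $(isb)^n = (-1)^{N-1} s^{2N-2} b^n$ is real. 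Hence those terms contribute nothing to the imaginary part, and the remainder supplied by $\lnminus$ is precisely what bounds $s^{2N-1}$ times the imaginary part of $h(isb) - \sum_{|n|\le 2N-3} \rho_n/(isb)^n$.

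For the backward direction, assume $h \in \lnminusone$ together with the boundedness hypothesis. Fix any type I representation $h(z) = \langle\res\alpha,\alpha\rangle$ and adopt the notation of Lemma \ref{lem3.100}. The computation carried out inside the proof of Lemma \ref{thm3.500} rewrites the bounded quantity as $\langle -s^2 X_b^2/(X_b^2+s^2)\beta_{N-2},\beta_{N-2}\rangle$, and the spectral theorem together with monotone convergence forces $X_b \beta_{N-2} \in \mathrm{Dom}\,X_b$. Unwinding the definition $X_b = b_Y^{-1/2}Ab_Y^{-1/2}$ this is exactly the statement that $A$ has vector $(Y,\alpha)$-moments to order $N-1$, so Proposition \ref{prop3.200} yields $h \in \lnminus$.

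The main potential obstacle is the parity step in the forward direction: one must check carefully that the $|n|=2N-2$ portion of the expansion drops out after taking imaginary parts, so that the weaker hypothesis of subtracting only up to $|n|\le 2N-3$ (rather than $|n| \le 2N-2$) still captures exactly the $O(1/\|z\|^{2N-1})$ remainder. Everything else is a direct invocation of previously proved results.
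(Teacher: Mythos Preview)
Your proposal is correct and follows essentially the same approach as the paper. For the forward direction you make the identical parity observation that the $|n|=2N-2$ terms of the expansion are real along $z=isb$, and for the backward direction you invoke the computation inside Lemma~\ref{thm3.500} to produce vector $(Y,\alpha)$-moments to order $N-1$ and then apply Proposition~\ref{prop3.200}, which is exactly the paper's route (phrased there as ``the equivalence of (1) and (2) in Theorem~\ref{thmone}'').
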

		\begin{proof}
			Suppose $h\in\lnminus.$
			The term $\sum_{|n|={2N-2}} \frac{\rho_n}{(isb)^{n}}$ is real. So,
			$$s^{2N-1}\text{Im }\left[h(isb)-\sum_{|n|\leq{2N-3}} \frac{\rho_n}{(isb)^{n}}\right]
			=s^{2N-1}\text{Im }\left[h(isb)-\sum_{|n|\leq{2N-2}} \frac{\rho_n}{(isb)^{n}}\right],$$ 
			which is bounded since $h\in \lnminus.$

			On the other hand, suppose 
			$h\in\lnminusone$ and for each $b\in \rtwoplus,$
			$$s^{2N-1}\text{Im }\left[h(isb)-\sum_{|n|\leq{2N-3}} \frac{\rho_n}{(isb)^{n}}\right]$$
			is bounded.
			By Theorem \ref{thm3.500}, we have scalar moments to order $2N-1$ and thus vector $(Y,\alpha)$-moments.
			So by the equivalence of $(1)$ and $(2)$ in Theorem \ref{thmone}, we are done.
		\end{proof}
	
		The following finishes the proof of Theorem \ref{thmtwo} by showing that 
		We now prove $(1) \Leftrightarrow (3).$
		\begin{prop} \label{thm3.700}
			Let $h\in\p_2.$ Then, $h\in \ln$ if and only if $h \in \lnminus$ and there are residues, not necessarily real, $\{\rho_n\}_{n\leq 2N-1}$ such that
				$$h(z) = \sum_{|n|\leq 2N-1} \frac{\rho_n}{z^n}+ o\left(\frac{1}{\|z\|^{2N-1}}\right)$$
			nontangentially.
		\end{prop}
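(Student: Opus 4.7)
The forward direction is essentially definitional: if $h \in \ln$, then $h \in \lnminus$, and the defining expansion of $\ln$ already supplies real residues of order $2N-1$ with $o(1/\|z\|^{2N-1})$ error, which is a special case of the ``not necessarily real'' asymptotic in $(3)$.

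For the reverse direction, suppose $h \in \lnminus$ together with the asymptotic, and take a type I representation $h(z) = \langle(A-z_Y)^{-1}\alpha,\alpha\rangle$. By Theorem~\ref{thmone}, $A$ has vector $(Y,\alpha)$-moments to order $N-1$, so $\beta_{N-2} := X_b^{N-2}b_Y^{-1/2}\alpha$ is defined for each $b\in\rtwoplus$. The plan is to apply Lemma~\ref{lem3.100} at level $N-1$ (whose hypotheses are met by $h\in\lnminus$) to write
\[
h(isb) - \sum_{|n|\le 2N-3}\frac{\rho_n}{(isb)^n} \;=\; \frac{\langle[(X_b-is)^{-1}+(is)^{-1}]\beta_{N-2},\beta_{N-2}\rangle}{(is)^{2N-4}},
\]
multiply both sides by $(is)^{2N-1}$, and extract the $s\to\infty$ behavior. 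By the asymptotic hypothesis, the left side becomes $is\sum_{|n|=2N-2}\rho_n/b^n + \sum_{|n|=2N-1}\rho_n/b^n + o(1)$; using $(X_b-is)^{-1}+(is)^{-1} = X_b/(is(X_b-is))$ together with the spectral calculus for $X_b$, the right side becomes $-\langle s^2X_b^2/(X_b^2+s^2)\beta_{N-2},\beta_{N-2}\rangle - i\langle s^3X_b/(X_b^2+s^2)\beta_{N-2},\beta_{N-2}\rangle$.

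Monotone convergence sends the real part of the right side to $-\|\beta_{N-1}\|^2$ (which also establishes $\beta_{N-1} = X_b\beta_{N-2}\in\h$, since the left side is finite); matching real parts yields $\sum_{|n|=2N-1}\operatorname{Re}(\rho_n)/b^n = -\|\beta_{N-1}(b)\|^2$. For the imaginary part, the algebraic identity $s^3 x/(x^2+s^2) = sx - sx^3/(x^2+s^2)$ combined with dominated convergence on the tail (using the bound $|sx^3/(x^2+s^2)|\le x^2/2$ and $\beta_{N-1}\in\h$) identifies the imaginary part of the right side as $-s\langle\beta_{N-1},\beta_{N-2}\rangle + o(1)$. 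Matching $s$-coefficients reproduces the known formula for $r_{2N-2}(b)$, a consistency check, while matching constant coefficients forces $\sum_{|n|=2N-1}\operatorname{Im}(\rho_n)/b^n = 0$ on all of $\rtwoplus$. Linear independence of the monomials yields $\operatorname{Im}(\rho_n)=0$ for every $|n|=2N-1$, so all residues are real, and $h\in\ln$ by definition.

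The main obstacle is the delicate treatment of the right-side imaginary part, which naively diverges linearly in $s$: this blow-up must cancel against the LHS's $s$-linear contribution coming from the real residues of order $2N-2$, and only the surviving $O(1)$ term pins down the reality of the top-order residues. A subsidiary but nontrivial point is justifying $s\langle X_b^3/(X_b^2+s^2)\beta_{N-2},\beta_{N-2}\rangle\to 0$, which requires $\beta_{N-1}\in\h$; this is why the real-part analysis must be carried out first in order to establish the existence of $\beta_{N-1}$.
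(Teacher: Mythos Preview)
Your argument is correct, but it works harder than necessary because you apply Lemma~\ref{lem3.100} one level lower than the paper does. The paper invokes the lemma at level $N$ (with $\beta_{N-1}$), which is legitimate since $h\in\lnminus$ already guarantees vector moments to order $N-1$ via Theorem~\ref{thmone}. This yields
\[
(is)^{2N-1}\Bigl[h(isb)+\sum_{k=1}^{2N-1}r_k(isb)\Bigr]=\langle[is(X_b-is)^{-1}+1]\beta_{N-1},\beta_{N-1}\rangle,
\]
a bounded expression whose limit as $s\to\infty$ is $0$ by a straightforward dominated-convergence argument. Subtracting the assumed $o(1)$ expansion immediately gives $r_{2N-1}(b)=-\sum_{|n|=2N-1}\rho_n/b^n$, and since scalar moments are real-valued, linear independence forces the $\rho_n$ real. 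No blow-up ever appears.

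By contrast, your choice to stop at $\beta_{N-2}$ produces a right-hand side that diverges linearly in $s$; you then have to split into real and imaginary parts, subtract off the $s$-linear piece by hand, and argue the dominated-convergence tail estimate $\langle sX_b^3/(X_b^2+s^2)\beta_{N-2},\beta_{N-2}\rangle\to 0$. All of this is valid, but it is exactly the inductive step already absorbed into Lemma~\ref{lem3.100}. One small redundancy: you remark that the real-part analysis ``establishes $\beta_{N-1}\in\h$'', but you already obtained this at the outset from Theorem~\ref{thmone}, so the order-of-operations caveat in your last paragraph is unnecessary.
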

		\begin{proof}
			The forward direction is true by definition.

			On the other hand, suppose $h \in \lnminus$ and the residues exist.
			Let $b\in \rtwoplus.$ Let $h$ have a type I representation
				$$h(z) = \langle\res \alpha,\alpha\rangle .$$
			By Lemma \ref{lem3.100}, adopting its notation,
				$$h(isb) + \sum^{2N-1}_{k=1} r_{k}(isb) = (is)^{-2(N-1)}\langle[(X_b - is)^{-1} + (is)^{-1}] \beta_{N-1},\beta_{N-1}\rangle .$$
			Multiply through by $(is)^{2N-1}.$
				$$(is)^{2N-1}\left[ h(isb) + \sum^{2N-1}_{k=1} r_{k}(isb)\right] = \langle[is(X_b - is)^{-1} + 1] \beta_{N-1},\beta_{N-1}\rangle .$$
			So,
				$$(is)^{2N-1}\left[ h(isb) + \sum^{2N-1}_{k=1} r_{k}(isb)\right] = \langle[\frac{-s^2}{X_b^2 + s^2} + 1] \beta_{N-1},\beta_{N-1}\rangle .$$
			Applying the spectral theorem and taking limits gives
				$$\lim_{s\rightarrow \infty}(is)^{2N-1}\left[ h(isb) + \sum^{2N-1}_{k=1} r_{k}(isb)\right] = - \|\beta_{N-1}\|^2 + \|\beta_{N-1}\|^2 = 0.$$
			Now 
				$$\lim_{s\rightarrow \infty}(is)^{2N-1}\left[ h(isb) + \sum^{2N-1}_{k=1} r_{k}(isb) - h(isb) + \sum_{|n|\leq 2N-1} \frac{\rho_n}{(isb)^n} \right] = 0.$$
			Applying Theorem \ref{hmvsmtr},
				$$\lim_{s\rightarrow \infty}(is)^{2N-1}\left[ r_{2N-1}(isb) + \sum_{|n|= 2N-1} \frac{\rho_n}{(isb)^n} \right] = 0.$$
			Simplifying,
				$$\lim_{s\rightarrow \infty}r_{2N-1}(b) + \sum_{|n|= 2N-1} \frac{\rho_n}{b^n} = 0,$$
			that is, $$r_{2N-1}(b) = -\sum_{|n|= 2N-1} \frac{\rho_n}{b^n}.$$
		\end{proof}



\section{$\ln\neq\lnminus$} \label{lnneqlnminus}

Now we give an example that shows the hierarchy of L\"owner classes in two variables at infinity does not collapse, that is, $\ln\neq\lnminus,$ which was shown for the case $N=1$ in \cite{amy11a}.
That $\ln\neq\lnminus$ is in stark contrast to the theory in one variable where the
classes are identical\cite{ju20,car29}.

Let
 $\mathcal{H} = l^2(Z_{2(n-1)}),$
and $\pi: Z_{2(n-1)}\rightarrow B(l^2(Z_{2(n-1)}))$ a left regular representation i.e. $\pi(j)e_i = e_{j+i}.$
Let  $A = [\pi(1) + \pi(-1)]$ and $Y$ be a diagonal matrix satisfying
 $Ye_i = e_i$, for $i\neq n$, and $Ye_{n-1} = te_{n-1}.$
Let $\alpha = e_0.$
Let $f$ be the Pick function defined by
	$$f(z) = \langle(A-z_Y)^{-1}\alpha ,\alpha \rangle .$$

Recall
	$R_k(z) = (z_Y)^{-1}(Az_Y^{-1})^{k-1}e_0.$
If $k<n,$ it can be shown inductively that
	$$R_k(z)=z_1^{-k}\sum^{k-1}_{l=0} \begin{pmatrix}
	k-1 \\ l
	\end{pmatrix}e_{-(k-1)+2l}.$$
Furthermore,
	$$AR_{n-1}(z)=z_1^{-(n-1)}\sum^{n-1}_{l=0} \begin{pmatrix}
	n-1 \\ l
	\end{pmatrix}e_{-(n-1)+2l}$$
and
	$$R_n(z)=\frac{1}{tz_1+(1-t)z_2}z_1^{-(n-1)}e_{n-1} + z_1^{-n}\sum^{n-2}_{l=0} \begin{pmatrix}
	n-1 \\ l
	\end{pmatrix}e_{-(n-1)+2l}.$$
So, $r_{2n-1}(z) = \langle R_n(z),AR_{n-1}(\overline{z})\rangle $ is not a polynomial,
but for $k< 2n-1,$ $r_k$ is a polynomial. That is, $f \in \lnminus,$ but $f \notin \ln.$


\bibliography{references}
\end{document}